\newtheorem{lem}{Lemma}[section]
\newtheorem{thrm}[lem]{Theorem}
\newtheorem{prop}[lem]{Proposition}
\newtheorem{cor}[lem]{Corollary}
\theoremstyle{definition}
\newtheorem{defn}[lem]{Definition}
\theoremstyle{remark}
\newcommand{\eq}[2]{\begin{equation}\label{#1}#2\end{equation}}
\newcommand{\R}{\mathbb{R}}
\newcommand{\C}{\mathbb{C}}
\newcommand{\Z}{\mathbb{Z}}
\newcommand{\N}{\mathbb{N}}
\newcommand{\Schwartz}{\mathcal{S}}
\newcommand{\Test}{C_c^\infty}
\renewcommand{\Im}{\operatorname{Im}}
\renewcommand{\epsilon}{\varepsilon}
\newcommand{\mc}{\mathcal}
\newcommand{\mf}{\mathfrak}
\newcommand{\mbb}{\mathbb}
\DeclareMathOperator{\supp}{supp}
\DeclareMathOperator{\bbE}{\mathbb E}
\newcommand{\<}{\langle}
\renewcommand{\>}{\rangle}
\newcommand{\p}{\partial}
\newcommand{\bbo}{\mathbbm 1}
\newcommand{\bpf}{\begin{proof}}
\newcommand{\epf}{\end{proof}}
\newcommand{\qtq}[1]{\quad\text{#1}\quad}
\newcommand{\qt}[1]{\quad\text{#1}}
\newcommand{\LHS}[1]{\text{LHS}\eqref{#1}}
\newcommand{\RHS}[1]{\text{RHS}\eqref{#1}}
\newcommand{\cN}{\mc N}
\newcommand{\II}{I\!I}
\newcommand{\III}{I\!I\!I}
\newcommand{\bbP}{\mbb P}
\newcommand{\X}{{L_\mu^2}}
\newcommand{\Xe}{{L_{\mu^\epsilon}^2}}
\newcommand{\dmue}{\tfrac{d\mu^{\mkern-1mu\epsilon}\!\!}{dx}\,}
\newcommand{\dmut}{\tfrac{d\mu^{\mkern-1mu\eta}\!\!}{dx}\,}
\title[NLS with sprinkled nonlinearity]{The nonlinear Schr\"odinger equation with sprinkled nonlinearity}
\author{Benjamin Harrop-Griffiths}
\address{Department of Mathematics \& Statistics, Georgetown University\\Washington, DC 20057, USA}
\email{benjamin.harropgriffiths@georgetown.edu}
\thanks{}
\author{Rowan Killip}
\address{Department of Mathematics, University of California, Los Angeles\\ CA 90095, USA}
\email{killip@math.ucla.edu}
\thanks{}
\author{Monica Vi\c san}
\address{Department of Mathematics, University of California, Los Angeles\\CA 90095, USA}
\email{visan@math.ucla.edu}
\thanks{}
\numberwithin{equation}{section}
\begin{document}

\begin{abstract}
We prove global well-posedness for the cubic nonlinear Schr\"odinger equation with nonlinearity concentrated on a homogeneous Poisson process.
\end{abstract}

\maketitle

\section{Introduction}

We consider the dynamics of a field $\psi:\R\to\C$ under the Hamiltonian
\begin{align}\label{Emu}
E[\psi] := \int_\R \tfrac12|\psi'(x)|^2 \,dx + \int_\R \tfrac12|\psi(x)|^4\,d\mu(x),
\end{align}
where $\mu$ is a measure on $\R$. This leads to the nonlinear Schr\"odinger equation
\begin{align}\label{NLS}\tag{NLS${}_\mu$}
i\psi_t = -\psi_{xx} + 2|\psi|^2\psi \,d\mu.
\end{align}

This model is relevant in physical scenarios where the wave self-interaction is mediated by the environment.   The measure $\mu$ captures the strength of these effects and how they vary in space due to variations in the ambient matter.   In optics, for example, nonlinear effects originate from the interaction of light with the atoms of the matter through which it is traveling \cite{Boyd}. 

As the atomic scale is significantly smaller than the wavelength of visible light, one may hope that homogenization effects allow one to treat the light-matter interaction as spatially homogeneous and so take $d\mu$ as a multiple of Lebesgue measure.  Indeed, one may interpret the evident successes in modeling optical experiments with this homogeneous NLS (see \cite{MR3308230}) as compelling evidence that the presumption of homogenization was justified.  

The quest to better understand such homogenization phenomena is one of our motivations for introducing the sprinkled nonlinearity model in this paper.  In this model,  $\mu$ is random and distributed according to a Poisson process (we will elaborate on this later).   We contend that this model is physically natural; moreover, it is also consciously adversarial to naive notions of homogenization.  Nevertheless, the question of homogenization of the sprinkled nonlinearity has been successfully treated in the subsequent and very recent paper \cite{HGN}.

The most obvious anathema to the traditional homogeneous NLS (in which $\mu$ is Lebesgue measure) is to choose $\mu$ as a single point mass.  A substantial literature on this model has been developed in both mathematics and physics.  For a thorough account of the mathematical literature on this point-concentrated nonlinearity, we recommend the recent review \cite{MR4653819}.   Such highly concentrated nonlinearities arise as effective models in a number of physical situations, as described in  
\cite{F_Kh_Abdullaev_2004, PhysRevB.56.15090, PhysRevE.84.056609, LIDORIKIS1998346, PhysRevB.54.1537, PhysRevB.47.10402, 10.1119/1.1417529}.  Indeed, advances in nanoscale fabrication have led researchers to consider double barriers \cite{JONALASINIO19951}, as well as periodic arrays of such nonlinearities \cite{PhysRevB.44.13082, 10.1063/1.111416, PhysRevA.101.043830, SENOUCI2014592, KSenouci1999, PhysRevE.65.036609, PhysRevB.51.11221}.  

Rather than placing the centers of nonlinear interaction in a regular array, we are interested in sprinkling them in a completely random manner through the ambient material.  The simplest physical model for such a sprinkling of points is the Gibbs state for a gas of non-interacting particles.  The resulting distribution of points is known as a (homogeneous) Poisson process. This is also the natural model for dilute impurities in a crystal (or raisins in a plum pudding).  We will discuss these connections more fully in subsection~\ref{SS:PP}.

Given the locations for our point interactions, we build the measure $\mu$ by placing a delta measure at each point.  As there will only be finitely many points in each compact set, our measure $\mu$ is locally finite, that is, it gives finite mass to any compact set. We write $\mf M(\R)$ for the set of locally finite Borel measures on $\R$.  We make $\mf M(\R)$ a measurable space by endowing it with the Borel $\sigma$-algebra $\mf B$ induced by the vague topology, that is, the weakest topology under which $\mu \mapsto \int \phi \,d\mu$ is continuous for every continuous function $\phi:\R\to\C$ of compact support.

To describe a random configuration of point interactions, we must allow $\mu$ to be random.  This means choosing a probability space $(\Omega,\mf F,\bbP)$ and a (measurable) map $\mu:\Omega\to\mf M(\R)$.  In this setup, the (random) number of points in a compact set $K\subset\R$ is given by the random variable $\mu(K):\Omega\to[0,\infty)$ defined by $\omega\mapsto \mu(K;\omega)$.  Similarly, the expected (=average) number of points in the set $K$ may be written variously as
$$
\bbE\Bigl\{ \mu(K) \Bigr\} = \int_\Omega \mu(K;\omega)\, d\bbP(\omega)
	= \int_\Omega \int_\R \! \bbo_{\mkern-2mu K}(x) \,d\mu(x;\omega)\, d\bbP(\omega)
	= \bbE\Bigl\{ \int_\R\! \! \bbo_{\mkern-2mu K}(x) \,d\mu(x)\Bigr\} .
$$
In general, we will suppress the dependence on $\omega$ as in the first and last expressions.

The specific distribution of points we wish to consider is the homogeneous Poisson process:

\begin{defn}\label{D:pp}
A random measure $\mu:\Omega\to\mf M(\R)$ is (distributed as) the \emph{homogeneous Poisson process of unit intensity} if it satisfies
\begin{equation}\label{E:pp}
\bbE\Bigl\{ \exp\Bigl(-\textstyle\int_\R \phi(x) \,d\mu(x)\Bigr) \Bigr\}
	= \exp\Bigl(-\textstyle\int_\R [1-e^{-\phi(x)}] \,dx\Bigr)
	\qtq{for all} \phi\in C_c(\R).
\end{equation}
\end{defn}

The functionals appearing on LHS\eqref{E:pp} are known as Laplace functionals.  They uniquely determine the distribution of the measure $\mu$; see \cite[Corollary~2.3]{MR854102}.

This is a \emph{Poisson} process because the number of points $\mu(K)$ in any compact set $K$ follows a Poisson distribution.  The average number of points is $|K|$,  the Lebesgue measure of $K$.  More generally, for any collection of disjoint measurable sets $E_j$, the random variables $\mu(E_j)$ are \emph{independent} Poisson random variables with means $|E_j|$.

The special role of Lebesgue measure in dictating the average number of points in any set makes our Poisson process \emph{homogeneous} and of \emph{unit intensity}. Homogeneity expresses the translation invariance of Lebesgue measure.  One may alter the intensity by multiplying Lebesgue measure by a (positive) numerical factor.  By employing scaling, however, we see that there is no loss of generality in selecting unit intensity.   We have chosen the process to be homogeneous because we regard this as the most physical case.  Nevertheless, our methods allow much flexibility in this regard, as will become evident in due course. 

The main result of this paper is the demonstration of (almost-sure) well-posedness for the nonlinear Schr\"odinger equation \eqref{NLS} when the nonlinearity is concentrated on a Poisson process:

\begin{thrm}\label{t:main}Let \(\mu\) be distributed according to the Poisson process \eqref{E:pp}.  Then the equation \eqref{NLS} is almost surely globally well-posed in \(H^1(\R)\).  Specifically,\\
{\upshape(a)} Given any initial data \(\psi_0\in H^1\), almost surely there is a unique \(\psi\in C(\R;H^1)\) that satisfies
\eq{Duhamel NLS}{
\psi(t) = e^{it\p_x^2}\psi_0 - 2i\int_0^te^{i(t-\tau)\p_x^2}\Bigl[|\psi(\tau)|^2\psi(\tau)d\mu\Bigr]\,d\tau.
}
{\upshape(b)} The energy \eqref{Emu} of the solution is both finite and conserved; so too is the mass
\begin{equation}\label{Mmu}
M[\psi] = \int_\R |\psi|^2\,dx.
\end{equation}
{\upshape(c)}  For any sequence of initial data, \(\psi_{n,0}\to \psi_0\) in \(H^1\), the corresponding solutions \(\psi_n\in C(\R;H^1)\), which exist almost surely, converge to $\psi$ in $L^p\bigl(d\bbP;C([-T,T];H^1)\bigr)\mkern-5mu:$
\eq{continuity}{
\lim_{n\to \infty}\bbE\Bigl[\,\sup_{|t|\leq T}\|\psi_n(t) - \psi(t)\|_{H^1}^p\Bigr]= 0,
}
for any time \(T>0\) and any $1\leq p<\infty$.
\end{thrm}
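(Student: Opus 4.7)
I would follow the classical four-step template for semilinear dispersive equations: (i) local well-posedness by contraction mapping, (ii) conservation of mass and energy via approximation, (iii) globalization from the a priori $H^1$ bound that the conserved energy provides, and (iv) continuous dependence via a Gronwall argument combined with moment estimates under $\bbP$.  The main obstacle, I expect, will be the trilinear estimate in step (i): unlike in the literature on finitely many point-concentrated nonlinearities, here the Poisson measure places countably many delta masses on all of $\R$, so the resulting infinite sum in Duhamel's formula must be controlled.

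For step (i) I would fix a realization $\mu=\sum_j \delta_{x_j}$ and work in $X_T=C([-T,T];H^1)$.  Since $H^1(\R)\hookrightarrow L^\infty$, each trace $\psi(t,x_j)$ is pointwise well-defined and dominated by $\|\psi(t)\|_{H^1}$, so the Duhamel nonlinearity decomposes as
$$\sum_j \int_0^t e^{i(t-\tau)\p_x^2}\bigl[|\psi(\tau,x_j)|^2\psi(\tau,x_j)\,\delta_{x_j}\bigr]\,d\tau.$$
The key trilinear estimate should control this sum in $X_T$ by $T^\theta\|\psi\|_{X_T}^3$ for some $\theta>0$.  I would derive it by combining the 1D local smoothing inequality $\||D_x|^{1/2}e^{it\p_x^2}f\|_{L^\infty_x L^2_t}\lesssim\|f\|_{L^2}$ (whose dual says that a Duhamel integral against a point mass gains $\tfrac12$ derivative) with a partition of $\R$ into unit intervals, using the decay of $H^1$ functions at infinity together with moment bounds on the Poisson counts $\mu([n,n+1])$ to sum the contributions from all points.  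A standard contraction on a small ball then yields a unique $\psi\in X_T$ for some $T=T(\|\psi_0\|_{H^1},\mu)>0$.

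For (ii) and (iii) I would regularize $\mu\rightsquigarrow\mu^\epsilon$ by mollification, obtaining a standard NLS with a smooth inhomogeneous coefficient which enjoys classical mass and energy conservation from its Hamiltonian structure.  Uniform-in-$\epsilon$ local estimates justify the limit $\epsilon\to 0$ and transfer both conservation laws to the limiting solution of \eqref{NLS}.  Since $E[\psi]\ge\tfrac12\|\p_x\psi\|_{L^2}^2$ and $M[\psi]$ is also conserved, $\|\psi(t)\|_{H^1}$ stays bounded uniformly in $t$, and iterating the local theorem on successive intervals of fixed length produces a global solution.

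For (iv), Gronwall applied to $\psi_n-\psi$ via the same trilinear estimate yields, on each realization,
$$\sup_{|t|\le T}\|\psi_n(t)-\psi(t)\|_{H^1}\le C(T,\|\psi_0\|_{H^1},\mu)\,\|\psi_{n,0}-\psi_0\|_{H^1}.$$
To promote this to convergence in $L^p(d\bbP;C_tH^1)$ I would verify that $C(T,\|\psi_0\|_{H^1},\mu)$ has finite moments of every order; this is reasonable because the local estimates depend on $\mu$ only through counts of Poisson points in compact sets, and each $\mu(K)$ has exponential tails.  Dominated convergence then yields \eqref{continuity}.
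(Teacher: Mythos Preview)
Your plan has a genuine gap at step~(i), and it propagates through the rest of the argument.  The trilinear estimate you propose,
\[
\biggl\|\int_0^t e^{i(t-\tau)\p_x^2}\bigl[|\psi|^2\psi\,d\mu\bigr]\,d\tau\biggr\|_{C_TH^1}
\;\lesssim_\mu\; T^\theta\|\psi\|_{C_TH^1}^3,
\]
fails for almost every realization of $\mu$.  The Borel--Cantelli argument in the introduction shows that a.s.\ there exist unit intervals $I_{x_n}$ with $\mu(I_{x_n})\geq n$.  Given any candidate constant, choose $\psi$ to be an $H^1$ bump supported near such a rich interval with $n$ large; the nonlinearity then picks up a factor of $n$ that the right-hand side does not.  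Your appeal to ``decay of $H^1$ functions at infinity'' cannot rescue this: contraction mapping requires the estimate uniformly over a ball in $C_TH^1$, and the ball contains functions concentrated arbitrarily far out.  Invoking ``moment bounds on the Poisson counts'' does not help either, since you need a pathwise estimate for a.s.\ well-posedness, and pathwise the counts $\mu(I_k)$ are unbounded in $k$.  The same obstruction breaks your step~(iv): there is no Gronwall inequality in $H^1$ with a constant $C(T,\|\psi_0\|_{H^1},\mu)$ that is uniform over sequences $\psi_{n,0}\to\psi_0$ in $H^1$.

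The paper's remedy is to introduce a weighted space $\X$, with weight $w(x;\mu)$ built from $\cN_k(\mu)^2=4+\sup_\ell[\mu(I_\ell)^2-|k-\ell|]$, which forces functions to be small exactly where $\mu$ is large.  The nonlinear estimates (Lemma~\ref{l:measure ests}) then close in $H^1\cap\X$ rather than $H^1$ alone.  Even so, the paper explicitly abandons Picard iteration: propagating the $\X$ norm requires control of the mass current, which needs $H^1$ bounds, which in turn need energy conservation---a property of true solutions, not iterates.  Instead the paper builds solutions as limits of a doubly regularized problem (mollified \emph{and} spatially truncated), proves a Lipschitz bound only in $H^{-1}$ (Proposition~\ref{p:weak Lipschitz}), and upgrades to $C_tH^1$ via energy conservation and the Radon--Riesz theorem.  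For part~(c), the $L^p(d\bbP)$ convergence comes from dominated convergence with a dominating function built from the conserved energy, using that $\bbE\|\psi_0\|_\X^{2p}\lesssim\|\psi_0\|_{L^2}^{2p}$ (Lemma~\ref{l:Nk finite}); no pathwise $H^1$ Lipschitz constant with controlled moments is available.
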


We say that a \(\psi\in C(\R;H^1)\) satisfying \eqref{Duhamel NLS} is a \emph{strong solution} to \eqref{NLS} with initial data $\psi_0$.  Note that RHS\eqref{Duhamel NLS} will, almost surely, constitute a tempered distribution for any $\psi\in C(\R;H^1)$.  The crux of this is the convergence of 
\begin{equation}\label{stdual}
\int_0^t\int_{-\infty}^\infty \overline{e^{-i(t-\tau)\partial_x^2}\phi(x)} |\psi(\tau,x)|^2\psi(\tau,x)\,d\mu(x)\,d\tau
\end{equation}
for any $t\in\R$ and any $\phi\in\Schwartz(\R)$; indeed, $\psi(\tau,x)$ is uniformly bounded on the region of integration and $e^{-i(t-\tau)\partial_x^2}\phi$ belongs to Schwartz space, which ensures sufficient decay to integrate $d\mu$.

On the other hand, it is far from clear that RHS\eqref{Duhamel NLS} belongs to $H^1(\R)$, as claimed in Theorem~\ref{t:main}.  There is, of course, the question of regularity; this appears already in the case of a point nonlinearity.  The more poignant problem here is decay: For almost every $\mu$ there is a choice of $\phi\in H^1$ and $\psi\in C(\R;H^1)$ so that \eqref{stdual} diverges for every $t>0$!

The origins of such divergence can be seen more easily at the level of the energy \eqref{Emu}.  For \(k\in \Z\), the random variables \(\mu\bigl([k,k+1)\bigr)\) are independent and Poisson-distributed with unit mean. In particular, the probability that there are $n$ many points in any unit interval $[k,k+1)$ is $e^{-1}/n!$. The Borel--Cantelli lemma then ensures that, almost surely, for all \(n\geq 1\) we have \(\mu\bigl([k,k+1)\bigr)\geq n\) for infinitely many choices of $k$. As a consequence, for almost every sample $\mu$ of the Poisson process there exists a sequence of distinct integers \(x_n\) with the property that
\begin{equation}\label{rich}\mu\bigl([x_n,x_n+1)\bigr) \geq n \qtq{for each} n\in\N.
\end{equation} It is then elementary to choose a $\psi_0\in H^1(\R)$, comprised of bumps (centered on these intervals) with slowly decaying amplitudes, so that $\psi_0$ has infinite energy.  This divergence shows that the (zero probability) set of exceptional measures $\mu$ that is excluded in Theorem~\ref{t:main} must depend on $\psi_0$.

Earlier, we described highly concentrated nonlinearities as representative of the case where the characteristic scale of the nonlinearity is much much narrower than that of the wave.  This can be made rigorous by considering a family of models with mollified nonlinearities and showing that their solutions converge to those of \eqref{NLS}.  Just such a result was proved for finite combinations of point interactions in \cite{MR3275343,MR2318828}.  We wish to do the same for our model.  To this end, we define the absolutely continuous measure $\mu^\epsilon$ with density (Radon--Nikodym derivative) 
\begin{equation}\label{mueps}
\dmue(x) = \int_\R\rho^\epsilon(x-y)\,d\mu(y), \qtq{where} \rho^\epsilon(x) = \tfrac1\epsilon\rho\left(\tfrac x\epsilon\right)
\end{equation}
and \(\rho\in \Test(-1,1)\) is non-negative, even, and satisfies \(\int_\R\rho(x)\,dx = 1\).  

\begin{thrm}\label{t:demol}
Fix $\psi_0\in H^1(\R)$ and let \(\mu\) be distributed according to the Poisson process \eqref{E:pp}.  Then, almost surely, the solutions $\psi_\epsilon(t)$, associated to the  mollified measures $\mu^\epsilon$ defined in \eqref{mueps},  converge to the solution $\psi(t)$ to \eqref{NLS}; indeed,
\begin{align}\label{eps to 0}
\lim_{\epsilon\to 0}\bbE\Bigl[\,\sup_{|t|\leq T}\|\psi_\epsilon(t) - \psi(t)\|_{H^1}^p\Bigr]= 0,
\end{align}
for any time \(T>0\) and any $1\leq p<\infty$.
\end{thrm}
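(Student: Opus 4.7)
The plan is to derive Theorem~\ref{t:demol} as a continuous-dependence statement for the solution map with respect to the measure, built on top of the nonlinear machinery used to prove Theorem~\ref{t:main}. Since $\mu^\epsilon$ is constructed from $\mu$ via \eqref{mueps}, I couple $\psi$ and $\psi_\epsilon$ on the same sample of the Poisson process and evolve both from the given $\psi_0\in H^1$. Because $\mu^\epsilon$ is absolutely continuous, the existence of $\psi_\epsilon\in C(\R;H^1)$, along with the conservation laws and an a.s.\ bound $\|\psi_\epsilon(t)\|_{H^1}\leq C(\psi_0,\mu,T)$ that is uniform in $\epsilon$, should follow from the local theory (either standard $H^1$-subcritical arguments or the same machinery that proves Theorem~\ref{t:main}).

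The core computation is the Duhamel difference, which I split as
\begin{align*}
\psi_\epsilon(t)-\psi(t) = -2i\int_0^t e^{i(t-\tau)\p_x^2}\bigl[(|\psi_\epsilon|^2\psi_\epsilon-|\psi|^2\psi)\,d\mu^\epsilon\bigr]\,d\tau - 2i\int_0^t e^{i(t-\tau)\p_x^2}\bigl[|\psi|^2\psi\,(d\mu^\epsilon-d\mu)\bigr]\,d\tau.
\end{align*}
For the first (Lipschitz) term, the multilinear estimates underpinning Theorem~\ref{t:main} should yield a bound of the form $C(\psi_0,\mu)\,\|\psi_\epsilon-\psi\|_{C([t_0,t_1];H^1)}$ on sufficiently short subintervals, so iterating over a partition of $[-T,T]$ and invoking Gronwall reduces matters to showing that the second (error) term vanishes as $\epsilon\to 0$.

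To handle the error term, I use the evenness of $\rho^\epsilon$ to rewrite any pairing against $d\mu^\epsilon-d\mu$ as the pairing of $\rho^\epsilon*\varphi-\varphi$ against $d\mu$. Since $\psi\in C([-T,T];H^1)$ has $H^1$-compact image, we have $\rho^\epsilon*(|\psi|^2\psi(\tau))\to |\psi|^2\psi(\tau)$ in $H^1$ uniformly in $\tau\in[-T,T]$. Combining this with the pairing estimate for $d\mu$ from the proof of Theorem~\ref{t:main} sends the error term to zero in $C([-T,T];H^1)$ almost surely, which closes Gronwall and gives $\sup_{|t|\leq T}\|\psi_\epsilon-\psi\|_{H^1}\to 0$ almost surely. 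The $L^p(d\bbP)$ upgrade \eqref{eps to 0} then follows by dominated convergence: the dominator is $(2C(\psi_0,\mu,T))^p$, whose expectation is finite because Poisson point counts in bounded intervals have all polynomial moments.

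I expect the main obstacle to be producing the Lipschitz estimate for the first term with a constant \emph{uniform in $\epsilon$}. Near each Poisson point, the measures $\mu^\epsilon$ and $\mu$ differ substantially on the scale $\epsilon$, so the measure-adapted spaces $\Xe$ and $\X$ are not trivially comparable; controlling the contraction constant by a quantity depending only on the Poisson configuration (and not on $\epsilon$) will likely require revisiting the contraction argument with the mollified measure inserted, and carefully tracking the $\epsilon\to 0$ behavior of every appearance of $d\mu^\epsilon$ in terms of convolutions with $\rho^\epsilon$ transferred onto the cubic nonlinearity.
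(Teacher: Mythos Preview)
The central gap is that your Gronwall argument is run in $C_tH^1$, and this cannot close for a singular measure. With $d\mu$ a sum of Dirac masses, the nonlinearity $|\psi|^2\psi\,d\mu$ is a measure; the best estimate the paper establishes (and the best one should expect) is the $H^{-1}$ bound \eqref{measure nonlin 2}. The Schr\"odinger propagator does not smooth, so the Duhamel integral of an $H^{-1}$ object stays in $H^{-1}$ --- there is no Lipschitz inequality of the form $\|\psi_\epsilon-\psi\|_{C_tH^1}\leq C\|\psi_\epsilon-\psi\|_{C_tH^1}\cdot(\text{small})$. This is exactly why the paper's uniqueness statement (Proposition~\ref{p:weak Lipschitz}) is proved in $H^{-1}$ via an intermediate $L^\infty$ bound, and why the introduction warns that ``construction of solutions via Picard iteration is problematic.'' Your error-term analysis has the same defect: you claim the second Duhamel piece vanishes in $C_tH^1$, but \eqref{measure nonlin 3} only places $|\psi|^2\psi(d\mu^\epsilon-d\mu)$ in $H^{-1}$.

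What the paper does instead is to prove convergence first in the weaker topologies $C_tL^\infty$, $C_tH^s$ for $s<1$, and $C_t\X$ (this is the content of Corollary~\ref{C:demol}, which re-runs the proof of Proposition~\ref{p:local existence} with $\varphi^\eta\equiv\varphi^\epsilon\equiv 1$), and then upgrades to $C_tH^1$ by combining weak $H^1$ convergence with convergence of $H^1$ norms, the latter obtained from conservation of energy and the Radon--Riesz theorem. Two further remarks: your worry that $\Xe$ and $\X$ are ``not trivially comparable'' is misplaced --- since the mollifier has support width $\epsilon\leq 1$, one has $\cN_k(\mu)\simeq\cN_k(\mu^\epsilon)$ uniformly in $\epsilon$ (see \eqref{999}), so the weighted spaces coincide with equivalent norms. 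And for the dominating function, ``Poisson counts in bounded intervals have all moments'' is not enough by itself, because rich intervals occur arbitrarily far out; the paper's dominator is built from the conserved energy and controlled via \eqref{energy lot} and \eqref{expect X}.
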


If $\mu$ were a finite measure, then each $\mu^\epsilon$ would have a bounded density and correspondingly, the well-posedness of the regularized model would be covered by the standard arguments described in \cite{MR2002047}.  For the Poisson process, however, the mollification procedure does nothing to reduce the impact of the sequences of  `rich' intervals such as the intervals $[x_n,x_n+1)$ described in \eqref{rich}.  In this way, the model with mollified coupling measure retains many of the original difficulties and this compels us to develop a unified well-posedness theory.  This theory will be deterministic and can be applied to very general measures $\mu$; see Section~\ref{s:deterministic}.

As we have explained, our archenemy in this work is the inevitable, but extremely erratic, appearance of rich clusters of points in samples of the Poisson process.   It is crucial to control the wave's interaction with these clusters.  For this purpose, we introduce a carefully chosen weighted $L^2$ norm that conforms to the highs and lows of the process.

Given a positive measure $\mu$ on $\R$ and an integer \(k\in \Z\) we define
\eq{Nk}{
\cN_k(\mu)^2 = 4 + \sup_{\ell\in \Z}\Bigl[\mu(I_\ell)^2 - |k - \ell|\Bigr],\qtq{where} I_\ell = [\ell-\tfrac12,\ell+\tfrac12).
}
This function provides a slowly varying approximation to \(\mu(I_k)\) in which rich intervals have a correspondingly wide range of influence.  We then define an associated weight via linear interpolation:
\eq{omega}{
w(x;\mu) = (1+k-x)\cN_k(\mu)^2 + (x-k)\cN_{k+1}(\mu)^2\qt{for \(x\in [k,k+1)\)}.
}
Finally, we define \(\X := L^2(\R;w(x;\mu)\,dx)\), that is, the Hilbert space with norm
\[
\|f\|_\X^2 = \int_\R |f(x)|^2\,w(x;\mu)\,dx.
\]

Naturally, we will only consider measures $\mu$ for which $w(x;\mu)$ is finite.  As we will show in Lemma~\ref{l:omega}, this is guaranteed by the single condition $\cN_0(\mu)<\infty$.  Lemma~\ref{l:Nk finite} then ensures that $\cN_0(\mu)$ is almost surely finite for samples from the Poisson process.

The engine of our paper is the following entirely deterministic well-posedness result: 

\begin{thrm}\label{t:deterministic}
Let \(\mu\) be a positive Borel measure for which \(\cN_0(\mu)<\infty\). Then \eqref{NLS} is globally well-posed in \(H^1\cap \X\).

Precisely, given any initial data \(\psi_0\in H^1\cap \X\) there exists a unique global strong solution \(\psi\in C(\R;H^1\cap \X)\) of \eqref{NLS} satisfying \(\psi(0) = \psi_0\). This solution conserves both the mass and the energy. Moreover, for any time \(T>0\) the solution map \(H^1\cap \X\ni\psi_0\mapsto \psi\in C([-T,T];H^1\cap \X)\) is continuous.
\end{thrm}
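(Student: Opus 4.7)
The plan is to prove Theorem~\ref{t:deterministic} by a standard contraction mapping argument in \(C([-T,T];H^1\cap \X)\). The weight \(w(x;\mu)\) in \(\X\) is precisely engineered to absorb the large local masses of \(\mu\) that would otherwise obstruct the Duhamel formula. A structural ingredient I will use throughout, immediate from \eqref{Nk}--\eqref{omega}, is that \(\cN_k(\mu)^2\) is 1-Lipschitz in \(k\); in particular \(w\) is piecewise linear with \(|w_x(x;\mu)|\le 1\) almost everywhere. I will also use the pointwise equivalence \(w(\,\cdot\,;\mu)\asymp \cN_\ell(\mu)^2\) on the unit interval \(I_\ell\).

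The first ingredient is a Sobolev-type embedding \(H^1\cap \X\hookrightarrow L^p(d\mu)\) for the relevant exponents \(p\in[2,\infty]\). Decomposing \(\int|f|^p\,d\mu=\sum_\ell\int_{I_\ell}|f|^p\,d\mu \le \sum_\ell \mu(I_\ell)\,\|f\|_{L^\infty(I_\ell)}^p\), using \(\mu(I_\ell)\le \cN_\ell(\mu)^2\) together with the one-dimensional Sobolev inequality \(\|f\|_{L^\infty(I_\ell)}^2\lesssim \|f\|_{L^2(I_\ell)}\bigl(\|f\|_{L^2(I_\ell)}+\|f'\|_{L^2(I_\ell)}\bigr)\), and paying for the \(\cN_\ell^2\) factor with the weight, a Cauchy--Schwarz summation over \(\ell\) yields \(\int|f|^2\,d\mu\lesssim\|f\|_\X\|f\|_{H^1}\) and analogous bounds for higher powers. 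This legitimizes \(|\psi|^2\psi\,d\mu\) as a source term.

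The linear theory reduces to two estimates. The free propagator preserves \(\|\cdot\|_{H^1}\); for the \(\X\) norm, the identity \(\tfrac{d}{dt}\|e^{it\p_x^2}\psi\|_\X^2 = 2\Im\int\overline{\psi}\,\psi_x\,w_x\,dx\), combined with \(|w_x|\le 1\) and Cauchy--Schwarz, bounds \(\|e^{it\p_x^2}\psi_0\|_\X\) in terms of \(\|\psi_0\|_\X\) and \(|t|\,\|\psi_0\|_{H^1}\). For the Duhamel integral with source \(F\,d\mu\), I would exploit the explicit fundamental solution \(e^{it\p_x^2}\delta_y(x)=(4\pi it)^{-1/2}e^{i(x-y)^2/4t}\), whose \(H^1\) norm is of order \(|t|^{-3/4}\) --- integrable on compact time intervals. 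Writing the Duhamel integral as a \(\mu\)-superposition of such point-mass propagators weighted by \(F=|\psi|^2\psi\), summing over the intervals \(I_\ell\) with the \(L^\infty(d\mu)\) control from the embedding step, and absorbing the resulting factors of \(\cN_\ell^2\) into \(w\), produces a trilinear bound of the form
\[
\Bigl\|\int_0^t e^{i(t-\tau)\p_x^2}[|\psi|^2\psi\,d\mu]\,d\tau\Bigr\|_{L^\infty_t([-T,T];H^1\cap \X)} \lesssim T^\alpha\,\|\psi\|_{L^\infty_t(H^1\cap \X)}^3
\]
for some \(\alpha>0\) and \(T\le 1\). With these in hand, the Duhamel map is a contraction on a small ball of \(C([-T,T];H^1\cap \X)\), yielding local existence, uniqueness, and Lipschitz continuity of the solution map in \(\psi_0\).

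For global existence I would first establish conservation of mass and energy by approximating \(\psi_0\) by smooth data (for which the formal computations \(\tfrac{d}{dt}M=0\) and \(\tfrac{d}{dt}E=0\) are rigorous) and passing to the limit via the continuous dependence just obtained. Energy and mass conservation then control \(\|\psi(t)\|_{H^1}\) uniformly in \(t\). Finally, since \(|\psi|^4 w\) is real, the full nonlinearity does not contribute to the \(\X\)-norm identity, so \(\tfrac{d}{dt}\|\psi(t)\|_\X^2 = 2\Im\int\overline{\psi}\,\psi_x\,w_x\,dx\) remains valid and gives at most linear-in-\(|t|\) growth of \(\|\psi(t)\|_\X^2\), ruling out blow-up of the \(H^1\cap \X\) norm. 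The main obstacle is the Duhamel estimate in \(H^1\): a naive per-point sum of the point-mass contributions diverges on precisely the rich clusters of \eqref{rich}, and the whole role of \(w\) and the envelope \(\cN_\ell(\mu)\) is to redistribute this cost over neighbouring intervals so that the Cauchy--Schwarz summation closes.
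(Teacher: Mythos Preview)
Your proposal contains a genuine gap at the central step: the Duhamel estimate in \(H^1\). The claim that \(e^{it\p_x^2}\delta_y\) has \(H^1\) norm of order \(|t|^{-3/4}\) is false --- in fact \(e^{it\p_x^2}\delta_y(x) = (4\pi it)^{-1/2}e^{i(x-y)^2/4t}\) has unit modulus everywhere and is not in \(L^2\), let alone \(H^1\). (On the Fourier side, \(\|e^{it\p_x^2}\delta_y\|_{H^s}^2 = \int(1+\xi^2)^s\,d\xi\), which diverges for every \(s\geq -\tfrac12\).) So the ``\(\mu\)-superposition of point-mass propagators'' scheme cannot produce an \(H^1\) bound. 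More broadly, \(|\psi|^2\psi\,d\mu\) is a genuine singular measure; the best one obtains directly is an \(H^{-1}\) bound (this is \eqref{measure nonlin 2} in the paper), which is two derivatives short of what a contraction in \(H^1\) would require. The paper's introduction explicitly flags this: Picard iteration is problematic because \(H^1\) control of the iterates and transport control into rich intervals are intertwined, and there is no easy smallness parameter.

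The paper therefore proceeds by an energy method rather than contraction mapping. It regularizes \(\mu\) to a smooth compactly supported potential \(V^\epsilon\) (for which the standard \(H^1\) theory applies), proves \emph{uniform} a priori bounds in \(H^1\) and \(\X\) via conservation of mass and energy (Lemma~\ref{l:AP bds}), and shows the regularized solutions are Cauchy in \(C_tH^s\cap\X\) for \(s<1\) using the \(L^\infty\) dispersive estimate together with Littlewood--Paley truncation (Proposition~\ref{p:local existence}). Crucially, \(H^1\) regularity of the limit is \emph{not} obtained from a Duhamel bound but is recovered a posteriori: weak \(H^1\) compactness plus an energy inequality give \(\psi\in C_tH^1_w\), then energy conservation (proved by a time-reversal contradiction argument) and the Radon--Riesz theorem upgrade this to norm continuity (Proposition~\ref{p:global existence}). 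Uniqueness is handled separately by a Gronwall argument in \(L^\infty\) based on the dispersive estimate (Proposition~\ref{p:weak Lipschitz}). Your linear \(\X\)-propagation identity and the embedding \(H^1\cap\X\hookrightarrow L^p(d\mu)\) are both correct and do appear in the paper's argument, but they are not enough to close a contraction by themselves.
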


This result is used to prove Theorem~\ref{t:main} in Section~\ref{s:main proof}.  Indeed, Lemma~\ref{l:Nk finite} shows that any fixed $\psi_0\in H^1(\R)$ belongs to $L^2_\mu$ for almost every sample $\mu$ from the Poisson process \eqref{E:pp}.   The arguments used to prove Theorem~\ref{t:deterministic} also yield a deterministic analogue of Theorem~\ref{t:demol}; see Corollary~\ref{C:demol}.  This constitutes the basis for the proof of Theorem~\ref{t:demol} in Section~\ref{s:main proof}.

\subsection{Outline of the proof} 
We first focus on the deterministic result Theorem~\ref{t:deterministic}. As Lemma~\ref{l:Nk finite} demonstrates, the restriction $\cN_0(\mu)<\infty$ is satisfied by samples from the Poisson process; consequently, we must accept that there exist unit intervals with arbitrarily large $\mu$ measure.   On the other hand, the condition that $\psi_0\in L^2_\mu$ ensures that initially, at least, the waveform is not strongly concentrated on such rich clusters.  In particular, the energy is initially finite; see Lemma~\ref{l:measure ests}.

To proceed, we must confront the natural (and inevitable) spreading of dispersive waves into such rich regions.   Indeed, a concentrated nonlinearity naturally creates high frequency waves and these, we fear, may carry the mass to regions where $\mu$ is large.  While we believe such transportation does take place, we will ultimately be able to control it in a satisfactory manner by proving $L^2_\mu$ bounds. 

For nonlinear Schr\"odinger equations, the transportation of $L^2$ norm is governed by the mass current (= momentum density), which is given by $2\Im[\;\!\overline\psi \psi'\;\!](x)$.  This we will control using the conservation of energy.
 
Energy conservation is, of course, a property of true solutions, which is precisely what we are trying to construct!  It is not a property of Picard iterates, for example.  In fact, construction of solutions via Picard iteration is problematic: one needs $H^1$ bounds on the iterates to control transport and one needs transport bounds to control $H^1$ growth.  Moreover, there is no easy way to exhibit the smallness needed to close a bootstrap argument.  One may make the time interval small, but how short it needs to be is delicately entwined with the spreading of the Picard iterates into rich intervals.

Instead, we will construct solutions in the style of an energy method.  The first step is to regularize the model, not solely in the sense of \eqref{mueps}, but also by limiting the existence of rich clusters.  This we do with a spatial truncation:
\eq{V eps}{
V^\epsilon(x) = \varphi^\epsilon(x) \dmue(x)\in \Test(\R) \qtq{where} \varphi^\epsilon(x) = \varphi(\epsilon x)
}
and \(\varphi\in \Test(-2,2)\) is a non-negative bump function that is identically \(1\) on \([-1,1]\).
The resulting model is easily solved by contraction mapping; see Proposition~\ref{p:regularized}.

We then show that the sequence of solutions to this regularized model converge as $\epsilon\to0$, at least on short time intervals, and moreover, the limit $\psi(t,x)$ is an $L^\infty_t H^1_x$ solution to \eqref{NLS}; see Proposition~\ref{p:local existence}.  At this moment in the argument, we know that $t\mapsto\psi(t)$ is weakly $H^1$-continuous, but we do not know that it is norm continuous.   Nevertheless, Proposition~\ref{p:weak Lipschitz} guarantees uniqueness of such a solution.

To prove well-posedness of \eqref{NLS}, we must show that the limiting solution $\psi$ belongs to $C(\R; H^1)$ and demonstrate continuity of the data-to-solution map.  We start by proving that $\psi$ conserves both mass and energy.  This is then used to demonstrate that $\psi$ is a global in time solution and that it belongs to $C(\R; H^1)$; see Proposition~\ref{p:global existence}.  Finally, the proof of Theorem~\ref{t:deterministic} is completed with Proposition~\ref{p:cont}, which shows that the data-to-solution map $\psi_0\mapsto\psi(t)$ is $H^1\cap\X$-continuous.  In this way, the deterministic analysis is completed in Section~\ref{s:deterministic}.

Section~\ref{s:main proof} is devoted to proving our main results about the stochastic model, namely, Theorems~\ref{t:main} and~\ref{t:demol}.  Lemma~\ref{l:Nk finite} shows that our deterministic Theorem~\ref{t:deterministic} is almost surely applicable for each initial data $\psi_0\in H^1$.  This leaves us to verify the convergence claims \eqref{continuity} and \eqref{eps to 0}.  Our earlier results already yield almost sure convergence.  We deduce $L^p(d\bbP)$ convergence by exhibiting a suitable dominating function and applying the dominated convergence theorem.

\subsection{Summary of existing literature} To the best of our knowledge, the model discussed in this paper has not been considered previously.  It synthesizes two facets of the theory that have been considered before: the coupling constant is given by a stationary process and is concentrated on a zero-dimensional set.

Nonlinear Schr\"odinger equations with an inhomogeneous coupling constant in front of the nonlinearity arise very naturally in many physical contexts.  This has led to a great many mathematical investigations of this case, too numerous to recount here. However, the literature on this subject (that we know of) has centered on the case of a single localized perturbation of the coupling constant.  In an experimental setting, this corresponds to a carefully engineered apparatus.
By comparison, the paper \cite{MR4162949} stands out to us for treating coupling constants given by certain stationary stochastic processes, including periodic, quasiperiodic, and alloy-type models.  Concretely, \cite{MR4162949}  proves homogenization to the case of constant coupling constant in the mass-critical two-dimensional setting.

As noted earlier, nonlinear Schr\"odinger equations with nonlinearity concentrated at a single point (or at finitely many points) have generated considerable interest in both mathematics and physics; see the recent review \cite{MR4653819}.  The quantity of literature on these models is so great that we will confine our discussion to the cubic model in one dimension and focus only on well-posedness questions.

A cubic nonlinearity with a single delta measure at $x=0$ as coupling constant
\begin{gather}\label{NLSdelta}\tag{NLS${}_\delta$}
i\psi_t = -\psi_{xx} \pm 2\delta|\psi|^2\psi 
\end{gather}
is a mass-critical model. This is the observation that the scaling
$$
\psi(t,x) \mapsto \sqrt{\lambda} \psi(\lambda^2t,\lambda x)
$$
that preserves the class of solutions also preserves the (conserved) mass \eqref{Mmu}.  As we will describe, the known phenomenology of \eqref{NLSdelta} mimics that of the mass-critical homogenous equation.

Local existence for \eqref{NLSdelta} with initial data in $L^2(\R)$ is known and such solutions were shown to be global in the case of small initial data; see \cite{MR4548487}.  As observed in \cite{MR4018570}, the model \eqref{NLSdelta} is pseudoconformally invariant.  This transformation yields explicit blow-up solutions in the focusing case.

Most researchers have discussed this problem for initial data $\psi_0\in H^1$. For such data, the defocusing \eqref{NLSdelta} was shown to be globally well-posed in \cite{MR1814425,MR2318828}.  In the focusing case, there exists an explicit mass threshold so that \eqref{NLSdelta} is globally well-posed in $H^1$ below this threshold and ill-posed above it; see \cite{MR4018570}.  For either sign of the nonlinearity, small solutions scatter; see \cite[\S2]{MR4188177}.

As noted earlier in the context of Theorem~\ref{t:demol}, the papers \cite{MR3275343,MR2318828} have already demonstrated the validity of the $\delta$ interaction as describing the limit of less singular nonlinearities.

\subsection{Point processes}\label{SS:PP} The purpose of this section is to provide additional background on possible choices for the distribution of the concentrated nonlinearities. In particular, we will describe two physical origins of the Poisson process, as well as its connection to other models.

The statistics of a gas of a fixed number of particles in finite volume is described by the canonical ensemble \cite{MR289084,MR1042093}.  Let us consider $n$ non-interacting particles in the finite interval $\Lambda=[-L,L]$.  In this case, the Laplace functionals of the particle positions $X_j$ can be easily evaluated: for any $\phi \in C_c(\R)$,
\begin{equation}
\begin{aligned}\label{can Lap fun}
\bbE_{n,\Lambda}\Bigl\{ \exp\Bigl( - {\textstyle\sum_{j=1}^n } \phi(X_j) \Bigr)\Bigr\} &= \frac1{|\Lambda|^{n}} \int_{\Lambda^n} \exp\Bigl( - {\textstyle\sum_{j=1}^n } \phi(x_j) \Bigr) \,dx_1 \ldots\,dx_n \\
&= \biggl( 1 + \frac1{|\Lambda|} \int_{\Lambda}  \bigl[ e^{- \phi(x)} - 1 \bigr] \,dx \biggr)^n .
\end{aligned}
\end{equation}
This shows that the $X_j$ are just $n$ statistically independent samples from the uniform distribution on $\Lambda$.  (There is no temperature dependence because there is no interaction.)

The Poisson process \eqref{E:pp} arises naturally in the thermodynamic limit of this model. To take the thermodynamic limit, we must send both $n$ and $L$ to infinity in such a way that the density $n/|\Lambda|$ converges.  Specifically, taking this limit with $n/|\Lambda| \to 1$, we see that the Laplace functionals \eqref{can Lap fun} converge to those of the Poisson process \eqref{E:pp}.  This in turn guarantees convergence to the Poisson process \eqref{E:pp}; see \cite[\S4.2]{MR854102}.

As noted earlier, the investigation of electron and photon dynamics in periodic structures have led a number of researchers to consider the model \eqref{NLS} with
\begin{align}\label{333}
d\mu(x) = \sum_{k\in\Z} \delta(x-k) \,dx ;
\end{align}
see \cite{PhysRevB.44.13082, 10.1063/1.111416, PhysRevA.101.043830, SENOUCI2014592, KSenouci1999, PhysRevE.65.036609, PhysRevB.51.11221}.  This has come to be known as the nonlinear Kronig--Penney model.  We are not aware of any prior discussion of the well-posedness problem for this model; however, it fits easily within the scope of Theorem~\ref{t:deterministic}.  Indeed, the absence of randomness and of rich intervals (in the sense of \eqref{rich}) make this model rather easier to treat than the general case we consider in this paper.

Earlier in the introduction we mentioned that the Poisson process also arises as the natural model of dilute impurities in a crystal.  This can be understood as a variation on \eqref{333}. Specifically, consider
\begin{align}\label{444}
d\mu(x) = \sum_{k\in\Z} Z_k \delta(x-k h ) \,dx
\end{align}
where $h>0$ is the atomic spacing and $Z_k$ are independent Bernoulli random variables.  We choose $Z_k=1$ with probability $p$, which represents the presence of the impurity; with probability $1-p$ there is no impurity and $Z_k=0$.   The Laplace functionals for this model are easily evaluated: for any $\phi\in C_c(\R)$,
\begin{align}\label{555}
\bbE_{p,h}\Bigl\{ \exp\Bigl( - {\textstyle\int}\phi(x)\,d\mu(x) \Bigr)\Bigr\} &= \prod_{k\in\Z} \biggl( 1 + p [e^{-\phi(hk)} - 1] \biggr).
\end{align}

For any fixed $h>0$ and $p\in[0,1]$, the model described by \eqref{444} lies within the scope of Theorem~\ref{t:deterministic}.   However, the fundamental rationale for the concentrated nonlinearity is that the atomic scale is much smaller than the characteristic scale of the wave $\psi$; correspondingly, we must send $h\to 0$.  As $h\to0$, we send $p\to0$ in such a way that $p/h \to 1$; this ensures a limiting unit density of impurities.  In this regime, it is easy to verify that RHS\eqref{555} converges to RHS\eqref{E:pp}.  In this way, we see that the Poisson process also provides the natural description of low-density impurities in a crystal in the regime relevant to us.

\subsection*{Acknowledgements}
Rowan Killip was supported by NSF grants DMS-2154022 and DMS-2452346. Monica Vi\c san was supported by NSF grant DMS-2054194.  The authors are grateful to the anonymous referees for their comments and suggestions.

\section{Preliminaries}\label{s:prelim}

Throughout the paper, $\lesssim$ and $\gtrsim$ indicate bounds that hold up to a multiplicative constant whose exact value is of no consequence.  We abbreviate $A\lesssim B\lesssim A$ as $A\simeq B$.  We write $C(\R)$ for the space of bounded continuous functions on $\R$.

For all \(s>\frac12\) we have the Sobolev embedding \(H^s\hookrightarrow C(\R)\), as demonstrated by the Gagliardo--Nirenberg inequality
\eq{GN}{
\|f\|_{L^\infty}\lesssim\|f\|_{L^2}^{1-\frac1{2s}}\|f\|_{H^s}^{\frac1{2s}}.
}
As we consider finite-energy solutions, we will typically use the case $s=1$.  The principal properties of the space $H^1(\R)$ that we employ are the following:
\begin{align}
\|fg\|_{H^1} &\lesssim \|f\|_{L^\infty}\|g\|_{H^1} + \|f\|_{H^1}\|g\|_{L^\infty} \label{product} \\
\|fg\|_{H^1}&\lesssim \|f\|_{H^1}\|g\|_{H^1} \label{alg} \\
\|fg\|_{H^{-1}}&\lesssim \|f\|_{H^{-1}}\|g\|_{H^1} \label{dual product} .
\end{align}
The first estimate is a direct consequence of the product rule.  The bound \eqref{alg} expresses the algebra property of $H^1(\R)$ and follows from \eqref{product} and \eqref{GN}.  The final estimate \eqref{dual product} is implied by \eqref{alg} and the realization of $H^{-1}(\R)$ as the dual of $H^1(\R)$ through the usual $L^2$-paring.

We fix a non-negative function \(\chi\in \Test(-1,1)\) so that \(\chi_k(x) = \chi(x-k)\) satisfies \(\sum_{k\in \Z}\chi_k(x) = 1\). One may readily verify that
\[
\|f\|_{L^2}^2\simeq \sum_{k\in \Z}\|\chi_k f\|_{L^2}^2\qtq{and}\|f\|_{H^1}^2\simeq \sum_{k\in \Z}\|\chi_k f\|_{H^1}^2 .
\]
Using complex interpolation, it follows that
\eq{l2 Hs}{
\|f\|_{H^s}^2\simeq \sum_{k\in \Z}\|\chi_k f\|_{H^s}^2
}
for all \(0\leq s\leq 1\).

We now turn to the properties of the space \(\X\). Recalling the definitions \eqref{Nk} and \eqref{omega}, we have the following:

\begin{lem}\label{l:omega}
If \(\mu\) is a positive Borel measure on \(\R\) for which \(\cN_0(\mu)<\infty\), then \(\cN_k(\mu)<\infty\) for all \(k\in \Z\) and we have the estimates
\begin{alignat}{3}
|\cN_k(\mu)^2 - \cN_\ell(\mu)^2| &\leq |k - \ell|&&\qt{for all \(k,\ell\in \Z\)},\label{1-Lip Nk}
\\
\mu(I_\ell) \leq \cN_\ell(\mu) &\simeq \cN_k(\mu)&&\qt{whenever \(|k - \ell|\leq 3\)}.\label{compare mu Nk}
\end{alignat}

The corresponding weight \(w\) is a \(1\)-Lipschitz function on \(\R\) satisfying the estimate
\eq{growth}{
|w(x;\mu)|\leq \cN_0(\mu)^2+|x|,
}
and we have the equivalent norm
\eq{equiv norm}{
\|f\|_\X^2\simeq \sum_{k\in \Z}\cN_k(\mu)^2\|\chi_k f\|_{L^2}^2.
}
\end{lem}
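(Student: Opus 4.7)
The plan is to handle the four bulleted claims in order, exploiting the fact that the definition \eqref{Nk} realizes $\cN_k(\mu)^2 - 4$ as a supremum over the parameter $k$ of affine functions.

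\textbf{Lipschitz bound \eqref{1-Lip Nk} and finiteness.} For each fixed $\ell\in\Z$ the map $k\mapsto \mu(I_\ell)^2 - |k-\ell|$ is $1$-Lipschitz on $\Z$ by the reverse triangle inequality. Taking a supremum preserves the Lipschitz constant, so $k\mapsto \cN_k(\mu)^2 - 4$ is $1$-Lipschitz, which is exactly \eqref{1-Lip Nk}. In particular $\cN_k(\mu)^2 \leq \cN_0(\mu)^2 + |k| < \infty$ for all $k\in\Z$.

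\textbf{Estimate \eqref{compare mu Nk}.} Taking the single term $\ell'=\ell$ in the supremum defining $\cN_\ell(\mu)^2$ yields $\cN_\ell(\mu)^2 \geq 4 + \mu(I_\ell)^2$, hence $\mu(I_\ell)\leq \cN_\ell(\mu)$. For the comparability, \eqref{1-Lip Nk} and $|k-\ell|\leq 3$ give $|\cN_k(\mu)^2 - \cN_\ell(\mu)^2|\leq 3$; combined with the lower bound $\cN_k(\mu)^2\geq 4$, this implies $\tfrac14\cN_k(\mu)^2 \leq \cN_\ell(\mu)^2 \leq \tfrac74\cN_k(\mu)^2$.

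\textbf{The weight $w$.} On each interval $[k,k+1)$ the function $w(\,\cdot\,;\mu)$ is affine with slope $\cN_{k+1}(\mu)^2 - \cN_k(\mu)^2$, whose absolute value is bounded by $1$ by \eqref{1-Lip Nk}. A direct check at the endpoints shows $w(k;\mu) = \cN_k(\mu)^2$, so $w$ is continuous across integer points and hence globally $1$-Lipschitz on $\R$. The growth estimate \eqref{growth} follows from $w(0;\mu) = \cN_0(\mu)^2$ and the Lipschitz bound.

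\textbf{Equivalent norm \eqref{equiv norm}.} The key point is that on each interval $[m,m+1)$ the weight satisfies $w(x;\mu)\simeq \cN_m(\mu)^2$, since $w$ interpolates linearly between $\cN_m(\mu)^2$ and $\cN_{m+1}(\mu)^2$, which are comparable by \eqref{compare mu Nk}. Since $\chi_k$ is supported in $(k-1,k+1)$ and only a uniformly bounded number of the $\chi_k$ overlap at any point, we have
\[
\sum_{k\in\Z} \cN_k(\mu)^2 \chi_k(x)^2 \simeq w(x;\mu),
\]
where the upper bound uses $\cN_k\simeq \cN_m$ for $k$ near $m$ together with $\sum_k \chi_k^2 \leq \sum_k \chi_k = 1$, and the lower bound uses Cauchy--Schwarz applied to the finite sum $1 = \sum_k \chi_k(x)$ at each $x$ (restricted to the indices where $\chi_k(x)\neq 0$), giving $\sum_k \chi_k(x)^2 \gtrsim 1$. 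Multiplying by $|f(x)|^2$ and integrating yields \eqref{equiv norm}.

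The only delicate step is the last one, and the mild subtlety there is that $\chi_k$ is a partition of unity rather than of squared unity, so passing between $\int |f|^2 w\,dx$ and $\sum_k \|\chi_k f\|_{L^2}^2$ requires the two-sided bound on $\sum_k \chi_k^2$ described above combined with the comparability on neighboring integer intervals. Everything else reduces to the elementary observation that $\cN_k^2 - 4$ is a supremum of $1$-Lipschitz functions.
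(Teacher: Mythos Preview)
Your proof is correct and follows essentially the same approach as the paper. The only cosmetic difference is in the equivalent norm \eqref{equiv norm}: the paper first notes that $|w(x;\mu)-\cN_k(\mu)^2|\leq 1\leq\tfrac14\cN_k(\mu)^2$ on $\supp\chi_k$ and then invokes the already-established equivalence \eqref{l2 Hs} (applied to $f\sqrt w$), whereas you re-derive the underlying pointwise bound $\sum_k\chi_k(x)^2\simeq 1$ directly via Cauchy--Schwarz; these are the same argument unpacked to different depths.
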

\bpf
From the estimate
\[
\Bigl|\bigl[\mu(I_\ell)^2 - |k-\ell|\bigr] - \bigl[\mu(I_\ell)^2 - |k+1-\ell|\bigr]\Bigr|\leq 1,
\]
we see that if one of \(\cN_k(\mu)\), \(\cN_{k+1}(\mu)\) is finite, then the other is also finite and we may bound
\[
|\cN_k(\mu)^2 - \cN_{k+1}(\mu)^2|\leq 1.
\]
By induction on \(|k|\), if \(\cN_0(\mu)<\infty\) then we have \(\cN_k(\mu)<\infty\) for all \(k\in \Z\) and the estimate \eqref{1-Lip Nk} holds.

The inequality in \eqref{compare mu Nk} follows directly from the definition \eqref{Nk}. For the equivalence, we note that \eqref{1-Lip Nk} and the fact that \(\cN_k(\mu)^2\geq 4\) ensures that if \(|k - \ell|\leq 3\) then
\[
|\cN_k(\mu)^2 - \cN_\ell(\mu)^2|\leq 3 \leq \tfrac34\cN_k(\mu)^2.
\]

Next, by \eqref{1-Lip Nk} and the definition \eqref{omega}, we see that for all \(x,y\in \R\) we have
\eq{1-Lip omega}{
|w(x;\mu) - w(y;\mu)|\leq |x-y|,
}
and hence \(w\) is \(1\)-Lipschitz. The estimate \eqref{growth} then follows from the fact that \(w(0;\mu) = \cN_0(\mu)^2\). Moreover, if \(x\in \supp\chi_k\subseteq (k-1,k+1)\) then \eqref{1-Lip omega} yields
\[
|w(x;\mu) - \cN_k(\mu)^2| = |w(x;\mu) - w(k;\mu)|\leq 1 \leq \tfrac14  \cN_k(\mu)^2 .
\]
Using this, \eqref{equiv norm} follows from \eqref{l2 Hs}.
\epf

Next we discuss the relationship between $\cN_0(\mu)$ and the mollified measure $\mu^\epsilon$ defined in~\eqref{mueps}:

\begin{lem}
If \(\mu\) is a positive Borel measure on \(\R\) with \(\cN_0(\mu)<\infty\), then the function \(\dmue\in C^\infty(\R)\) and we have the estimate
\begin{align}
\bigl\|\chi_k\dmue\bigr\|_{L^1}&\lesssim \cN_k(\mu),  \label{mu-eps-L1}
\end{align}
uniformly for \(0<\epsilon\leq 1\), as well as the estimate
\eq{mu-eps-diff}{
\|\chi_k(d\mu^\eta - d\mu^{\epsilon})\|_{H^{-1}}\lesssim \sqrt\eta\, \cN_k(\mu),
}
uniformly for \(0\leq\epsilon<\eta\leq 1\).
\end{lem}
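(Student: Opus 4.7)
The smoothness of $\dmue$ is routine: since $\cN_0(\mu)<\infty$ implies $\cN_k(\mu)<\infty$ for every $k\in\Z$ by Lemma~\ref{l:omega}, the measure $\mu$ is locally finite, so on any compact set $K$ only the $\mu$-mass of a bounded neighborhood contributes to $\int_\R\rho^\epsilon(x-y)\,d\mu(y)$, and we may differentiate under the integral arbitrarily often using the smoothness of $\rho$.

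For the $L^1$ bound \eqref{mu-eps-L1}, I would exploit non-negativity and Fubini to rewrite
\[
\|\chi_k\dmue\|_{L^1}=\int_\R\chi_k(x)\int_\R\rho^\epsilon(x-y)\,d\mu(y)\,dx=\int_\R(\chi_k*\rho^\epsilon)(y)\,d\mu(y),
\]
where the evenness of $\rho$ is what converts the inner integral into a genuine convolution. Since $\chi_k$ is supported in $(k-1,k+1)$ and $\rho^\epsilon$ in $(-\epsilon,\epsilon)\subseteq(-1,1)$, the function $\chi_k*\rho^\epsilon$ is bounded by $\|\chi_k\|_{L^\infty}\|\rho^\epsilon\|_{L^1}\lesssim 1$ and supported in $(k-2,k+2)$, which meets only the four unit intervals $I_{k-2},\dots,I_{k+1}$. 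Summing the trivial bound $\mu(I_\ell)\leq\cN_\ell(\mu)\simeq\cN_k(\mu)$ across these intervals using \eqref{compare mu Nk} closes the estimate.

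For the $H^{-1}$ bound \eqref{mu-eps-diff}, I would argue by duality against test functions $\phi\in H^1$ with $\|\phi\|_{H^1}\leq 1$. Setting $g=\chi_k\phi$, the product estimate \eqref{product} yields $\|g\|_{H^1}\lesssim 1$, and $g$ is supported in $(k-1,k+1)$. The same Fubini identity as above (valid also for $\epsilon=0$, on reading $g*\rho^0$ as $g$) gives
\[
\int_\R\phi\,\chi_k\,(d\mu^\eta-d\mu^\epsilon)=\int_\R\bigl[(g*\rho^\eta)(y)-(g*\rho^\epsilon)(y)\bigr]\,d\mu(y).
\]
The main mechanism is a H\"older-in-space estimate for $H^1$ functions: from $g(y)-g(y-z)=\int_0^z g'(y-s)\,ds$ and Cauchy--Schwarz we obtain $|g(y)-g(y-z)|\leq|z|^{1/2}\|g'\|_{L^2}$, and since $\supp\rho^\tau\subseteq(-\tau,\tau)$ this implies
\[
\|g*\rho^\tau - g\|_{L^\infty}\leq\sqrt{\tau}\,\|g'\|_{L^2}\qtq{for all} 0\leq\tau\leq 1.
\]
The triangle inequality then gives $\|g*\rho^\eta-g*\rho^\epsilon\|_{L^\infty}\lesssim\sqrt{\eta}\,\|g\|_{H^1}$. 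Since the difference is supported in $(k-2,k+2)$, where $\mu$ has mass $\lesssim\cN_k(\mu)$ by \eqref{compare mu Nk}, the $\mu$-integral is bounded by $\sqrt{\eta}\,\|g\|_{H^1}\,\cN_k(\mu)\lesssim\sqrt{\eta}\,\cN_k(\mu)\,\|\phi\|_{H^1}$; taking the supremum over $\phi$ yields \eqref{mu-eps-diff}.

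The only step with real content is the $\sqrt{\eta}$ gain, and it is exactly the manner in which the smallness of the mollification scale is converted into smallness in the dual space; the rest is bookkeeping to transfer norms onto the dual object via Fubini and to localize the $\mu$-mass using \eqref{compare mu Nk}.
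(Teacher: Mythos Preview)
Your argument is correct.  For \eqref{mu-eps-L1} it is essentially the paper's proof specialized to $\phi\equiv 1$: the paper phrases the same Fubini/convolution identity as a duality against $L^\infty$, but since everything is non-negative this is the same computation.  (A harmless bookkeeping slip: the support $(k-2,k+2)$ meets the \emph{five} intervals $I_{k-2},\dots,I_{k+2}$, not four; \eqref{compare mu Nk} still applies since $|k-\ell|\leq 2\leq 3$.)

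For \eqref{mu-eps-diff} your route is genuinely different.  The paper passes to Fourier space and uses the fundamental theorem of calculus in the mollification parameter, writing $\widehat{\rho}(\eta\xi)-\widehat{\rho}(\epsilon\xi)=\int_\epsilon^\eta \widehat{\rho}\,'(h\xi)\,\xi\,dh$ and then extracting $\sqrt{\eta}$ from $\int_\epsilon^\eta\|\widehat{\rho}\,'(h\,\cdot)\|_{L^2}\,dh\lesssim\int_\epsilon^\eta h^{-1/2}\,dh$.  You instead stay in physical space and use that $H^1$ functions are $\tfrac12$-H\"older, which immediately gives $\|g*\rho^\tau-g\|_{L^\infty}\leq\sqrt{\tau}\,\|g'\|_{L^2}$.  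Your approach is more elementary and transparent about where the $\sqrt{\eta}$ comes from; the paper's Fourier argument is slightly more flexible in principle (it separates the roles of $\xi\,\widehat{\phi\chi_k}$ and $\widehat{\rho}\,'$ cleanly), but in the end both use the compact support of $\rho$ to localize the $\mu$-integration to $(k-2,k+2)$, so neither generalizes to non-compactly-supported mollifiers without modification.
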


\bpf
As \(\cN_0(\mu)<\infty\), \(\mu\) is a locally finite Borel measure and so \(\dmue \in C^\infty(\R)\).

To prove the estimates \eqref{mu-eps-L1} and \eqref{mu-eps-diff}, we argue by duality. If \(0<\epsilon \leq 1\) and \(\phi\in L^\infty\) then the function \(\rho^\epsilon*(\chi_k \phi)\in C_c^\infty(k-2,k+2)\) satisfies
\[
\|\rho^\epsilon*(\chi_k\phi)\|_{L^\infty}\lesssim \|\phi\|_{L^\infty}.
\]
We may then apply \eqref{compare mu Nk} to obtain
\[
|\<\chi_k d\mu^\epsilon,\phi\>| = \left|\int_{k-2}^{k+2} \rho^\epsilon*(\chi_k\phi)\,d\mu\right| \lesssim \cN_k(\mu) \|\rho^\epsilon*(\chi_k\phi)\|_{L^\infty}\lesssim \cN_k(\mu)\|\phi\|_{L^\infty},
\]
which gives us \eqref{mu-eps-L1}.

For \eqref{mu-eps-diff} we take \(\phi\in H^1\) and apply Plancherel's Theorem followed by the Fundamental Theorem of Calculus to obtain
\begin{align*}
\bigl\<\chi_k\bigl(d\mu^\eta - d\mu^{\epsilon}\bigr),\phi\bigr\>&= \int_{k-2}^{k+2}\int_\R\bigl[\rho^\eta(x-y) - \rho^{\epsilon}(x-y)\bigr]\phi(x) \chi_k(x)\,dx\,d\mu(y)\\
&= \int_{k-2}^{k+2}\int_\R \int_{\epsilon}^\eta \overline{{\widehat\rho{\mkern3mu}}'(h\xi)}e^{iy\xi}\xi\,\widehat{(\phi\chi_k)}(\xi) \,dh\,d\xi\,d\mu(y).
\end{align*}
We then apply the Cauchy--Schwarz inequality, \eqref{compare mu Nk}, and then \eqref{alg} to bound
\[
\bigl|\bigl\<\chi_k\bigl(d\mu^\eta - d\mu^{\epsilon}\bigr),\phi\bigr\>\bigr|\lesssim \cN_k(\mu)\int_{\epsilon}^\eta \|{\widehat\rho{\mkern3mu}}'(h\xi)\|_{L_\xi^2}\|\xi\widehat{(\phi\chi_k)}\|_{L_\xi^2}\,dh\lesssim\sqrt\eta {\mkern2mu} \cN_k(\mu)\|\phi\|_{H^1}.\qedhere
\]
\epf

We conclude this section with several nonlinear estimates involving the doubly mollified density $V^\epsilon$ that was introduced in \eqref{V eps}:

\begin{lem}\label{l:measure ests}
Let \(\mu\) be a locally finite Borel measure on \(\R\).

If \(\frac23\leq s\leq 1\) and \(f_j\in H^s\cap \X\), then we have the estimate
\begin{align}\label{energy lot}
\int_\R \prod_{j=1}^4 |f_j| \,d\mu + \int_\R \prod_{j=1}^4 |f_j| \,d\mu^\epsilon + \int_\R \prod_{j=1}^4 |f_j| \,V^\epsilon\,dx \lesssim \prod_{j=1}^4\|f_j\|_\X^{\frac14}\|f_j\|_{H^s}^{\frac34},
\end{align}
uniformly for \(0<\epsilon\leq 1\), as well as
\begin{align}\label{energy lot cvgce}
\lim_{\epsilon \to 0}\int_\R \prod_{j=1}^4f_j\,V^\epsilon\,dx &=\int_\R  \prod_{j=1}^4f_j\,d\mu
	= \lim_{\epsilon \to 0}\int_\R \prod_{j=1}^4f_j\,d\mu^\epsilon .
\end{align}

If \(f_j\in H^1\cap \X\) and \(g\in C(\R)\), then
\begin{align}
\int_\R | f_1f_2\,g | \,d\mu + \bigl\|f_1f_2\,g\, \dmue\bigr\|_{L^1} &\lesssim\|g\|_{L^\infty}\prod_{j=1}^2\|f_j\|_\X^{\frac12}\|f_j\|_{H^1}^{\frac12},\label{measure nonlin 1}\\
\|f_1f_2\,g\,d\mu \|_{H^{-1}} &\lesssim \|g\|_{L^\infty}\prod_{j=1}^2\|f_j\|_\X^{\frac12}\|f_j\|_{H^1}^{\frac12},\label{measure nonlin 2}
\end{align}
uniformly for \(0<\epsilon\leq 1\).

Finally, if \(f\in H^1\cap \X\) and \(0<\epsilon\leq \eta\leq 1\), then
\eq{measure nonlin 3}{
\||f|^2f(d\mu^\eta - d\mu^{\epsilon}) \|_{H^{-1}} \lesssim \sqrt\eta\|f\|_\X\|f\|_{H^1}^2.
}
\end{lem}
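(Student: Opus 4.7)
The plan is to argue by duality: for every $\phi\in H^1(\R)$ I will show that
\[
\bigl|\langle |f|^2 f\,(d\mu^\eta - d\mu^{\epsilon}),\,\phi\rangle\bigr|\lesssim \sqrt\eta\, \|f\|_\X\|f\|_{H^1}^2\|\phi\|_{H^1},
\]
whence \eqref{measure nonlin 3} follows upon taking the supremum over $\|\phi\|_{H^1}\leq 1$. I split the pairing via the partition of unity $\sum_k\chi_k\equiv 1$ and introduce an auxiliary smooth bump $\tilde\chi_k\in\Test(k-2,k+2)$ that equals $1$ on $(k-1,k+1)$. Because $\epsilon,\eta\leq 1$, the distribution $\chi_k(d\mu^\eta - d\mu^{\epsilon})$ is supported in $(k-1,k+1)$, where $\tilde\chi_k^3\equiv 1$ as well; consequently,
\[
\langle \chi_k(d\mu^\eta - d\mu^{\epsilon}),\,|f|^2 f \phi\rangle = \langle \chi_k(d\mu^\eta - d\mu^{\epsilon}),\,\tilde\chi_k^3\,|f|^2 f \phi\rangle,
\]
and the $H^{-1}\times H^1$ duality together with \eqref{mu-eps-diff} bounds this by $\sqrt\eta\,\cN_k(\mu)\,\|\tilde\chi_k^3 |f|^2 f \phi\|_{H^1}$.

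The point of raising the bump to the third power is that $\tilde\chi_k^3 |f|^2 f\phi = (\tilde\chi_k f)(\tilde\chi_k \bar f)(\tilde\chi_k f)\phi$, distributing the localization across all three copies of $f$. Iterating \eqref{product} on this four-fold product and invoking the one-dimensional Gagliardo--Nirenberg estimate $\|\tilde\chi_k f\|_{L^\infty}\lesssim \|\tilde\chi_k f\|_{L^2}^{1/2}\|\tilde\chi_k f\|_{H^1}^{1/2}$ from \eqref{GN}, then absorbing weaker terms via $\|\tilde\chi_k f\|_{L^2}\leq \|\tilde\chi_k f\|_{H^1}$, yields
\[
\|\tilde\chi_k^3 |f|^2 f\phi\|_{H^1}\lesssim \|\tilde\chi_k f\|_{L^2}\|\tilde\chi_k f\|_{H^1}^2\|\phi\|_{H^1}.
\]

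Summing over $k$ and applying Cauchy--Schwarz then gives
\[
\sum_k \cN_k(\mu)\|\tilde\chi_k f\|_{L^2}\|\tilde\chi_k f\|_{H^1}^2 \leq \Bigl(\sum_k \cN_k(\mu)^2\|\tilde\chi_k f\|_{L^2}^2\Bigr)^{1/2}\Bigl(\sum_k\|\tilde\chi_k f\|_{H^1}^4\Bigr)^{1/2};
\]
the first factor is $\simeq \|f\|_\X$ by the finite-overlap analogue of \eqref{equiv norm} (swapping $\cN_k$ for neighboring $\cN_\ell$ via \eqref{compare mu Nk}), while the second is controlled by $\sup_k\|\tilde\chi_k f\|_{H^1}\cdot \bigl(\sum_k\|\tilde\chi_k f\|_{H^1}^2\bigr)^{1/2}\lesssim \|f\|_{H^1}^2$. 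The main obstacle motivating the cubing trick is that the naive single-bump localization $\tilde\chi_k |f|^2 f\phi=(\tilde\chi_k f)\bar f f\phi$ produces, when the derivative lands on the localized factor, a contribution only controllable by $\|\tilde\chi_k f\|_{H^1}$; the resulting sum $\sum_k \cN_k(\mu)\|\tilde\chi_k f\|_{H^1}$ is an $\ell^1$ expression in $k$ that is generally unbounded, since $\cN_k(\mu)$ may grow and no square root is available. Spreading the localization over all three copies of $f$ is precisely what converts this into the $\ell^2$-friendly combination $\|\tilde\chi_k f\|_{L^2}\|\tilde\chi_k f\|_{H^1}^2$ on which Cauchy--Schwarz closes.
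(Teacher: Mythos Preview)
Your proof is correct and follows essentially the same approach as the paper. The paper works directly with the $H^{-1}$ norm using the dual product estimate \eqref{dual product} and decomposes all four factors via the partition $\{\chi_{k_j}\}$ (with the support constraint $|k_i-k_\ell|\leq 1$ playing the role of your auxiliary bump $\tilde\chi_k^3$), then closes the sum with the pre-established weighted $L^\infty$ bound \eqref{measure Linf 1}; your explicit duality argument with the cubed bump and the direct Cauchy--Schwarz on $\cN_k\|\tilde\chi_k f\|_{L^2}\|\tilde\chi_k f\|_{H^1}^2$ achieves the same thing with only cosmetic differences.
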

\bpf
Given \(s>\frac12\) and \(f\in H^s\cap \X\), we apply the Gagliardo--Nirenberg inequality \eqref{GN} followed by H\"older's inequality and then the equivalent norms \eqref{l2 Hs} and \eqref{equiv norm} to estimate
\begin{equation}\label{measure Linf 1}
\begin{aligned}
\sum_{k\in \Z}\cN_k(\mu)^{2 - \frac1s} \|\chi_kf\|_{L^\infty}^2&\lesssim \sum_{k\in \Z}\cN_k(\mu)^{2-\frac1s} \|\chi_kf\|_{L^2}^{2-\frac1s}\|\chi_kf\|_{H^s}^{\frac1s}\\
&\lesssim \|f\|_\X^{2-\frac1s}\|f\|_{H^s}^{\frac1s}.
\end{aligned}
\end{equation}
Arguing similarly, if \(\frac23\leq s\leq 1\) we may bound
\begin{align}
\sum_{k\in \Z}\cN_k(\mu) \|\chi_kf\|_{L^\infty}^4\notag
&\lesssim  \sum_{k\in \Z}\cN_k(\mu) \|\chi_kf\|_{L^2}^{4-\frac2s} \|\chi_kf\|_{H^s}^{\frac2s}\notag\\
&\lesssim \sum_{k\in \Z}\cN_k(\mu) \|\chi_kf\|_{L^2} \|\chi_kf\|_{H^s}^3\notag\\
&\lesssim \left[\sum_{k\in \Z}\cN_k(\mu)^2 \|\chi_kf\|_{L^2}^2\right]^{\frac12}\left[\sum_{k\in \Z}\|\chi_kf\|_{H^s}^6\right]^{\frac12}\lesssim \|f\|_\X\|f\|_{H^s}^3.\label{measure Linf 2}
\end{align}

Turning to the estimate \eqref{energy lot}, we first note that by Sobolev embedding we have \(f_j\in C(\R)\).  In particular, for any \(k_j\in \Z\) we have \(\prod_{j=1}^4\chi_{k_j}f_j\in C_c(\R)\) and the integral \(\int_\R\prod_{j=1}^4 |\chi_{k_j}f_j|\,d\mu\) is well-defined. Using that \(\supp\chi_{k_j}\subseteq (k_j - 1,k_j+1)\), we may then apply \eqref{compare mu Nk} followed by H\"older's inequality and \eqref{measure Linf 2} to obtain
\begin{align*}
\sum_{k_1,k_2,k_3,k_4\in \Z}\int_\R\prod_{j=1}^4 |\chi_{k_j}f_j|\,d\mu &\lesssim \sum_{\substack{k_1,k_2,k_3,k_4\in \Z\\|k_i - k_\ell|\leq 1}}\prod_{j=1}^4 \cN_{k_j}(\mu)^{\frac14}\|\chi_{k_j} f_j\|_{L^\infty}\\
&\lesssim \prod_{j=1}^4 \left[\sum_{k_j\in \Z} \cN_{k_j}(\mu)\|\chi_{k_j}f_j\|_{L^\infty}^4\right]^{\frac14}\!\!\lesssim \prod_{j=1}^4\|f_j\|_\X^{\frac14}\|f_j\|_{H^s}^{\frac 34}.
\end{align*}
Applying the Fubini--Tonelli Theorem, we may then bound
\begin{align*}
\left|\int_\R\prod_{j=1}^4 f_j\,d\mu\right| &= \left|\sum_{k_1,k_2,k_3,k_4\in \Z}\int_\R\prod_{j=1}^4\chi_{k_j}f_j\,d\mu\right|\\&\leq \sum_{k_1,k_2,k_3,k_4\in \Z}\int_\R\prod_{j=1}^4 |\chi_{k_j}f_j|\,d\mu\lesssim \prod_{j=1}^4\|f_j\|_\X^{\frac14}\|f_j\|_{H^s}^{\frac 34},
\end{align*}
as claimed. For the second and third terms on \LHS{energy lot}, we first use \eqref{mu-eps-L1} and then \eqref{compare mu Nk} to estimate
\begin{align*}
\sum_{k_1,k_2,k_3,k_4\in\Z} \int_\R\prod_{j=1}^4 |\chi_{k_j}f_j| \,d\mu^\epsilon  +  \int_\R &\prod_{j=1}^4 |\chi_{k_j}f_j| V^\epsilon \,dx \\
&\lesssim \sum_{\substack{k_1,k_2,k_3,k_4,k_5\in \Z\\|k_i - k_\ell|\leq 1}}
	\bigl\|\chi_{k_5}\dmue\bigr\|_{L^1}\prod_{j=1}^4 \|\chi_{k_j} f_j\|_{L^\infty}\\
&\lesssim \sum_{\substack{k_1,k_2,k_3,k_4\in \Z\\|k_i - k_\ell|\leq 1}}\prod_{j=1}^4 \cN_{k_j}(\mu)^{\frac14} \|\chi_{k_j} f_j\|_{L^\infty},
\end{align*}
and the proof is then completed as for the first term.

Next we prove the first equality in \eqref{energy lot cvgce}; the second follows by the same argument.  We take \(K>1\) to be an integer and estimate
\begin{align*}
&\left|\int_\R \prod_{j=1}^4f_j\,V^\epsilon\,dx - \int_\R \prod_{j=1}^4f_j\,d\mu\right|\\
&\qquad\leq \sum_{\substack{k_1,k_2,k_3,k_4\in \Z\\|k_1|\leq K,\ |k_i - k_\ell|\leq 1}} \left|\int_\R \prod_{j=1}^4 \chi_{k_j}f_j\,V^\epsilon\,dx - \int_\R \prod_{j=1}^4 \chi_{k_j}f_j\,d\mu\right|\\
&\qquad\quad + \sum_{\substack{k_1,k_2,k_3,k_4\in \Z\\|k_1|> K,\ |k_i - k_\ell|\leq 1}} \left[\int_\R \prod_{j=1}^4 |\chi_{k_j}f_j||V^\epsilon|\,dx+ \int_\R \prod_{j=1}^4 |\chi_{k_j}f_j|\,d\mu\right].
\end{align*}
As \(\prod_{j=1}^4\chi_{k_j}f_j\in C_c(\R)\), for any fixed $k_1, \ldots, k_4\in \Z$ we have
\[
\lim_{\epsilon\to0}\int_\R \prod_{j=1}^4 \chi_{k_j}f_j\,V^\epsilon\,dx = \int_\R \prod_{j=1}^4 \chi_{k_j}f_j\,d\mu.
\]
Moreover, from \eqref{energy lot} we have
\begin{align*}
&\sup_{0<\epsilon \leq 1}\sum_{\substack{k_1,k_2,k_3,k_4\in \Z\\ |k_1|> K,|k_i - k_\ell|\leq 1}} \left[\int_\R \prod_{j=1}^4 |\chi_{k_j}f_j||V^\epsilon|\,dx+ \int_\R \prod_{j=1}^4 |\chi_{k_j}f_j|\,d\mu\right]\\
&\qquad\lesssim \sum_{\substack{k_1,k_2,k_3,k_4\in \Z\\|k_1|> K,|k_i - k_\ell|\leq 1}}\prod_{j=1}^4\|\chi_{k_j}f_j\|_\X^{\frac14}\|\chi_{k_j}f_j\|_{H^s}^{\frac34}\\
&\qquad\lesssim \prod_{j=1}^4\left[\sum_{|k_j|\geq K}\|\chi_{k_j}f_j\|_\X\|\chi_{k_j}f_j\|_{H^s}^3  \right]^{\frac14}.
\end{align*}
Taking $\varphi^{\frac{2}{K-1}}(x) = \varphi(\frac{2}{K-1}x)$ to be the smooth cutoff function discussed in \eqref{V eps}, we observe that \(1 - \varphi^{\frac 2{K-1}}\) is identically \(1\) for \(|x|\geq K-1\). As a consequence, we may replace \(f_j\) by \((1-\varphi^{\frac2{K-1}})f_j\) and then apply H\"older's inequality with \eqref{l2 Hs} and \eqref{equiv norm} to obtain
\begin{align*}
\prod_{j=1}^4\left[\sum_{|k_j|\geq K}\|\chi_{k_j}f_j\|_\X\|\chi_{k_j}f_j\|_{H^s}^3  \right]^{\frac14}\lesssim \prod_{j=1}^4\|(1-\varphi^{\frac2{K-1}})f_j\|_\X^{\frac14}\|(1-\varphi^{\frac2{K-1}})f_j\|_{H^s}^{\frac34}. 
\end{align*}
The estimate \eqref{energy lot cvgce} now follows from first taking \(\epsilon \to 0\) and then \(K\to\infty\).

The estimate \eqref{measure nonlin 1} is proved similarly to \eqref{energy lot}. For the first term on \LHS{measure nonlin 1} we apply \eqref{compare mu Nk} followed by the Cauchy--Schwarz inequality and \eqref{measure Linf 1} with $s=1$ to bound
\begin{align*}
\sum_{k_1,k_2\in \Z}\int_\R |\chi_{k_1}f_1| |\chi_{k_2}f_2| |g|\,d\mu
	&\lesssim \sum_{\substack{k_1,k_2\in \Z\\|k_1-k_2|\leq 1}}\sqrt{\cN_{k_1}\cN_{k_2}}\|\chi_{k_1} f_1\|_{L^\infty}\|\chi_{k_2}f_2\|_{L^\infty}\|g\|_{L^\infty}\\
&\lesssim \|g\|_{L^\infty}\prod_{j=1}^2\|f_j\|_\X^{\frac12}\|f_j\|_{H^1}^{\frac12}.
\end{align*}
For the second term on \LHS{measure nonlin 1} we use \eqref{mu-eps-L1} and \eqref{compare mu Nk} to estimate
\begin{align*}
\sum_{k_1,k_2}\int_\R |\chi_{k_1}f_1| |\chi_{k_2}f_2| |g| \bigl|\dmue\bigr|\,dx
	&\lesssim \!\!\!\sum_{\substack{k_1,k_2,k_3\\|k_j-k_\ell|\leq 1}}\!\!\!\bigl\|\chi_{k_3}\dmue\bigr\|_{L^1}\|\chi_{k_1} f_1\|_{L^\infty}
		\|\chi_{k_2}f_2\|_{L^\infty}\|g\|_{L^\infty}\\
&\lesssim\!\!\! \sum_{\substack{k_1,k_2\in \Z\\|k_1-k_2|\leq 1}}\!\!\!\!\sqrt{\cN_{k_1}\cN_{k_2}}\|\chi_{k_1} f_1\|_{L^\infty}\|\chi_{k_2}f_2\|_{L^\infty}\|g\|_{L^\infty},
\end{align*}
and then proceed as for the first term.

For \eqref{measure nonlin 2} we take \(\phi\in H^1\) and apply \eqref{measure nonlin 1} followed by \eqref{GN} to bound
\begin{align*}
\bigl|\<\phi,f_1f_2g\,d\mu\>\bigr|&\lesssim \|g\phi\|_{L^\infty}\prod_{j=1}^2\|f_j\|_\X^{\frac12}\|f_j\|_{H^1}^{\frac12}\\
&\lesssim \|g\|_{L^\infty}\|\phi\|_{H^1}\prod_{j=1}^2\|f_j\|_\X^{\frac12}\|f_j\|_{H^1}^{\frac12}.
\end{align*}
The estimate then follows by duality.

Finally, we apply the product estimates \eqref{dual product} and \eqref{product}, followed by \eqref{mu-eps-diff}, \eqref{compare mu Nk}, and finally \eqref{measure Linf 1} to estimate
\begin{align*}
&\||f|^2f(d\mu^\eta - d\mu^\epsilon)\|_{H^{-1}}\\
&\qquad\lesssim \sum_{\substack{k_1,k_2,k_3,k_4\in \Z\\|k_j - k_\ell|\leq 1}}\|\chi_{k_1}f\|_{L^\infty}\|\chi_{k_2} f\|_{L^\infty}\|\chi_{k_3}f\|_{H^1}\|\chi_{k_4}(d\mu^\eta - d\mu^\epsilon)\|_{H^{-1}}\\
&\qquad\lesssim \sum_{\substack{k_1,k_2\in \Z\\|k_1 - k_2|\leq 1}}\sqrt\eta\sqrt{\cN_{k_1}\cN_{k_2}}\|\chi_{k_1}f\|_{L^\infty}\|\chi_{k_2} f\|_{L^\infty}\|f\|_{H^1} \lesssim \sqrt\eta\,\|f\|_\X\|f\|_{H^1}^2,
\end{align*}
which yields \eqref{measure nonlin 3}.
\epf

\section{Deterministic well-posedness}\label{s:deterministic}

In this section we prove Theorem~\ref{t:deterministic}. We assume throughout that \(\mu\) is a fixed Borel measure for which \(\cN_0(\mu)<\infty\). We denote the space \(H^1\) endowed with the weak topology by \(H_w^1\).

We start by proving a Lipschitz estimate for the solution map of \eqref{NLS}, which yields uniqueness as an immediate corollary:

\begin{prop}\label{p:weak Lipschitz}
Suppose that for some \(T>0\) there are two solutions \(\psi,\phi\in C([-T,T];H_w^1\cap\X)\) of the Duhamel formulation \eqref{Duhamel NLS} of \eqref{NLS}. Then for all times \(|t|\leq T\) we have the estimate
\eq{Lipschitz}{
\|\psi(t) - \phi(t)\|_{H^{-1}}\leq C e^{CK^2(1 + K^2)|t|^2}\|\psi(0) - \phi(0)\|_{H^1},
}
where \(C>0\) is some absolute constant and
\eq{K defn}{
K = \sup_{|t|\leq T}\Bigl[\|\psi\|_{H^1}\|\psi\|_{\X} + \|\phi\|_{H^1}\|\phi\|_{\X}\Bigr].
}
\end{prop}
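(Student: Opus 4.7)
Set $u := \psi - \phi$. The plan is to derive an integral inequality for $\|u(t)\|_{H^{-1}}$ and close it via a Gr\"onwall-type argument.

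Subtracting the two Duhamel identities for $\psi$ and $\phi$ and expanding
\[
|\psi|^2\psi - |\phi|^2\phi \,=\, \bar\psi\psi\,u \,+\, \psi\phi\,\bar u \,+\, |\phi|^2 u,
\]
each summand of the nonlinear difference is a product of two ``smooth'' factors from $\{\psi,\bar\psi,\phi,\bar\phi\}$ paired with one factor of $u$ or $\bar u$. Taking $\|\cdot\|_{H^{-1}}$ of the resulting difference equation and using that $e^{it\partial_x^2}$ is unitary on $H^{-1}$ gives
\[
\|u(t)\|_{H^{-1}} \,\le\, \|u(0)\|_{H^{-1}} + 2\int_0^t \bigl\|(|\psi|^2\psi - |\phi|^2\phi)\,d\mu\bigr\|_{H^{-1}}\,d\tau.
\]
I would estimate the integrand by \eqref{measure nonlin 2}, placing the $u$- (or $\bar u$-) factor in the $L^\infty$-slot and the two smooth factors in the paired $\X$/$H^1$ slots. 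One application of AM--GM absorbs the smooth contribution into $\|\psi\|_{H^1}\|\psi\|_\X + \|\phi\|_{H^1}\|\phi\|_\X \le K$, giving $\|(\cdots)\,d\mu\|_{H^{-1}} \lesssim K\,\|u\|_{L^\infty}$. The $L^\infty$-norm of $u$ is then controlled by combining the 1D Gagliardo--Nirenberg bound $\|u\|_{L^\infty}^2 \lesssim \|u\|_{L^2}\|u\|_{H^1}$ with the duality estimate $\|u\|_{L^2}^2 \le \|u\|_{H^{-1}}\|u\|_{H^1}$, yielding
\[
\|u\|_{L^\infty} \,\lesssim\, \|u\|_{H^{-1}}^{1/4}\,\|u\|_{H^1}^{3/4}.
\]
The pointwise bound $w(x;\mu) \ge 4$ from \eqref{Nk} also implies $\|\psi\|_{H^1}\|\psi\|_{L^2} \le K/2$, so Gagliardo--Nirenberg further supplies the a priori $L^\infty$ bounds $\|\psi\|_{L^\infty},\,\|\phi\|_{L^\infty} \lesssim \sqrt K$.

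\emph{Main obstacle.} The $L^\infty$ interpolation above is only sublinear in $\|u\|_{H^{-1}}$ (power $1/4$), so a direct application produces merely a Bihari-type polynomial bound---insufficient for the multiplicative exponential form of \eqref{Lipschitz} (and the uniqueness corollary it is designed to deliver). To upgrade to a linear Gr\"onwall I would split $u = e^{it\partial_x^2}u_0 + v$, where $v$ is the Duhamel integral in the difference equation. The free part contributes $\|e^{it\partial_x^2}u_0\|_{L^\infty} \le C\|u_0\|_{H^1}$ by Sobolev embedding---precisely the linear dependence on $\|u_0\|_{H^1}$ sought in \eqref{Lipschitz}---while bootstrapping the bound $\|v(\tau)\|_{H^{-1}} \le CK\tau \sup_{|s| \le \tau}\|u(s)\|_{L^\infty}$ inserts the factor of $\tau$ responsible for the $t^2$ exponent. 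Feeding this decomposition back into the integral inequality should produce
\[
\|u(t)\|_{H^{-1}} \,\le\, C\|u(0)\|_{H^1} + CK^2(1+K^2)\int_0^t \tau\,\|u(\tau)\|_{H^{-1}}\,d\tau,
\]
after which classical Gr\"onwall delivers the $e^{CK^2(1+K^2)t^2}$ factor. Reconciling the sublinear interpolation exponents $1/4$ and $3/4$ with the required linear structure on the right-hand side is the delicate step at the heart of the argument.
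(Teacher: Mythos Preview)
Your diagnosis of the obstacle is accurate, but the proposed fix does not close. After splitting $u = e^{it\partial_x^2}u_0 + v$ and obtaining $\|v(\tau)\|_{H^{-1}} \lesssim K\tau\sup_{|s|\le\tau}\|u(s)\|_{L^\infty}$, you still have to convert $\|v\|_{H^{-1}}$ back into $\|v\|_{L^\infty}$ in order to re-enter the estimate. The only tool at your disposal for that step is the same interpolation $\|v\|_{L^\infty} \lesssim \|v\|_{H^{-1}}^{1/4}\|v\|_{H^1}^{3/4}$, and $\|v\|_{H^1}$ is merely an a priori constant (bounded by $\sup\|\psi\|_{H^1}+\sup\|\phi\|_{H^1}+\|u_0\|_{H^1}$), carrying neither smallness nor the factor $\|u_0\|_{H^1}$. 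The inequality you obtain for $\sup\|u\|_{L^\infty}$ is therefore still sublinear (power $1/4$), and no finite iteration of this step produces the linear integral inequality you wrote down. A Bihari argument from the sublinear bound would give only $\|u(t)\|_{H^{-1}}^{1/4} \le \|u_0\|_{H^{-1}}^{1/4} + C(K,T)$, which is useless for uniqueness.

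The missing ingredient is the dispersive estimate $\|e^{i(t-\tau)\partial_x^2}\|_{L^1\to L^\infty} \lesssim |t-\tau|^{-1/2}$. The paper runs the Gr\"onwall argument \emph{entirely in $L^\infty$}, not in $H^{-1}$: for $f\in L^1$ one has $e^{-i(t-\tau)\partial_x^2}f\in C(\R)\cap L^\infty$, so by duality and \eqref{measure nonlin 1} (rather than \eqref{measure nonlin 2}) with $g=e^{-i(t-\tau)\partial_x^2}f$ one obtains
\[
\bigl\|e^{i(t-\tau)\partial_x^2}\bigl[(|\psi|^2\psi - |\phi|^2\phi)\,d\mu\bigr]\bigr\|_{L^\infty}
\;\lesssim\; \tfrac{K}{\sqrt{|t-\tau|}}\,\|u(\tau)\|_{L^\infty}.
\]
This is linear in $\|u\|_{L^\infty}$; combined with your Sobolev bound on the free piece, H\"older in $\tau$ (to handle the integrable singularity) and raising to the fourth power yields a classical Gr\"onwall and hence $\|u(t)\|_{L^\infty}\lesssim e^{CK^4t^2}\|u_0\|_{H^1}$. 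Only after this $L^\infty$ bound is in hand does the paper invoke \eqref{measure nonlin 2}, exactly as in your first display, to convert to the stated $H^{-1}$ estimate by one further time integration.
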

\bpf
Using the Duhamel formula \eqref{Duhamel NLS} we may write the difference as
\[
\psi(t) - \phi(t) = e^{it\p_x^2}\bigl[\psi(0) - \phi(0)\bigr] -2i \int_0^t e^{i(t-\tau)\p_x^2}\Bigl[\bigl(|\psi(\tau)|^2\psi(\tau) - |\phi(\tau)|^2\phi(\tau)\bigr)d\mu\Bigr]\,d\tau.
\]
To apply a Gronwall's inequality-type argument, we must estimate the difference \(\psi - \phi\) in the same norm on both the left and right hand sides of this identity. As we will see, the \(L^\infty\) norm is very convenient for this; correspondingly, we will first prove the Lipschitz-type bound in this norm; see \eqref{542}.  The \(H^{-1}\)-bound \eqref{Lipschitz} that we need later in our argument will be deduced as a consequence of \eqref{542}.

For the first term, we apply \eqref{GN} to bound
\eq{superworm}{
\bigl\|e^{it\p_x^2}\bigl[\psi(0) - \phi(0)\bigr] \bigr\|_{L^\infty}\lesssim \bigl\|e^{it\p_x^2}\bigl[\psi(0) - \phi(0)\bigr] \bigr\|_{H^1}\lesssim \|\psi(0) - \phi(0)\|_{H^1}.
}

For the second term, we take a test function \(f\in L^1\) so that for \(\tau\neq t\) we have \(e^{-i(t-\tau)\p_x^2}f\in C(\R)\) with the dispersive estimate
\eq{dispersive}{
\|e^{-i(t-\tau)\p_x^2}f\|_{L^\infty}\lesssim \tfrac1{\sqrt{|t-\tau|}}\|f\|_{L^1}.
}
Applying \eqref{measure nonlin 1}, for \(0\leq \tau<t\leq T\) we may then bound
\begin{align*}
&\left|\left\<f,e^{i(t-\tau)\p_x^2}\Bigl[\bigl(|\psi(\tau)|^2\psi(\tau) - |\phi(\tau)|^2\phi(\tau)\bigr)d\mu\Bigr]\right\>\right|\\
&\qquad = \left|\int_\R \overline{e^{-i(t-\tau)\p_x^2}f}\bigl(|\psi(\tau)|^2\psi(\tau) - |\phi(\tau)|^2\phi(\tau)\bigr)\,d\mu\right|\\
&\qquad\lesssim \Bigl[\|\psi(\tau)\|_{H^1}\|\psi(\tau)\|_\X + \|\phi(\tau)\|_{H^1}\|\phi(\tau)\|_\X\Bigr]\Bigl\|\bigl(\psi(\tau) - \phi(\tau)\bigr) \overline{e^{-i(t-\tau)\p_x^2}f}\Bigr\|_{L^\infty}\\
&\qquad\lesssim \tfrac1{\sqrt{t-\tau}}K\|\psi(\tau) - \phi(\tau)\|_{L^\infty}\|f\|_{L^1},
\end{align*}
where \(K\) is defined as in \eqref{K defn}. By duality, we then have
\eq{highway rat}{
\Bigl\|e^{i(t-\tau)\p_x^2}\Bigl[\bigl(|\psi(\tau)|^2\psi(\tau) - |\phi(\tau)|^2\phi(\tau)\bigr)d\mu\Bigr]\Bigr\|_{L^\infty}\lesssim \tfrac1{\sqrt{t-\tau}}K\|\psi(\tau) - \phi(\tau)\|_{L^\infty}.
}

Combining \eqref{superworm}, \eqref{highway rat}, and then applying H\"older's inequality, for \(0<t\leq T\) we may bound
\begin{align*}
\|\psi(t) - \phi(t)\|_{L^\infty}&\lesssim \|\psi(0) - \phi(0)\|_{H^1} + K\int_0^t \tfrac1{\sqrt{t-\tau}}\|\psi(\tau) - \phi(\tau)\|_{L^\infty}\,d\tau\notag\\
&\lesssim \|\psi(0) - \phi(0)\|_{H^1} + K\left[\int_0^t(t- \tau)^{-\frac23}\,d\tau\right]^{\frac34}\left[\int_0^t \|\psi(\tau) - \phi(\tau)\|_{L^\infty}^4\,d\tau\right]^{\frac14}\notag\\
&\lesssim \|\psi(0) - \phi(0)\|_{H^1} + \left[K^4 t\int_0^t \|\psi(\tau) - \phi(\tau)\|_{L^\infty}^4\,d\tau\right]^{\frac14}. 
\end{align*}
Taking the forth power, we arrive at
\begin{align}\label{Gronwall helper}
\|\psi(t) - \phi(t)\|_{L^\infty}^4 
&\lesssim \|\psi(0) - \phi(0)\|_{H^1}^4 + K^4 t\int_0^t \|\psi(\tau) - \phi(\tau)\|_{L^\infty}^4\,d\tau.
\end{align}
This is amenable to the classical form of Gronwall's inequality, yielding
\begin{equation}\label{542}
\|\psi(t) - \phi(t)\|_{L^\infty} \lesssim e^{CK^4t^2}\|\psi(0) - \phi(0)\|_{H^1}
\end{equation}
for some absolute constant $C>0$.

Finally, we apply \eqref{measure nonlin 2} to bound
\begin{align*}
\|\psi(t) - \phi(t)\|_{H^{-1}} &\lesssim \|\psi(0) - \phi(0)\|_{H^{-1}} + \!\int_0^t \bigl\|\bigl(|\psi(\tau)|^2\psi(\tau) - |\phi(\tau)|^2\phi(\tau)\bigr)d\mu\bigr\|_{H^{-1}}d\tau\\
&\lesssim \|\psi(0) - \phi(0)\|_{H^1} + K\!\int_0^t \|\psi(\tau) - \phi(\tau)\|_{L^\infty}\,d\tau\\
&\lesssim \Bigl[1 + Kt e^{CK^4t^2}\Bigr]\|\psi(0) - \phi(0)\|_{H^1},
\end{align*}
which yields \eqref{Lipschitz} for times \(0<t\leq T\). The corresponding estimate for times \(-T\leq t<0\) follows from a parallel argument.
\epf

Next, we turn to the problem of existence. Taking \(V^\epsilon\in\Test(\R)\) to be defined as in \eqref{V eps}, we consider solutions \(\psi^\epsilon\colon \R\times \R\to \C\) of the regularized equation,
\begin{gather}\label{regularized NLS}\tag{NLS\(_\epsilon\)}
i\psi_t^\epsilon = - \psi_{xx}^\epsilon + 2V^\epsilon|\psi^\epsilon|^2\psi^\epsilon.
\end{gather}

The proof of the following proposition can be found in~\cite{MR2002047}, with the exception of the final statement that follows from the argument of \cite{MR0641651}:
\begin{prop}\label{p:regularized}
For all \(\epsilon>0\) the equation \eqref{regularized NLS} is globally well-posed in \(H^1\).
More precisely, for all initial data \(\psi^\epsilon(0)\in H^1\) there exists a unique solution \(\psi^\epsilon\in C(\R;H^1)\) of the Duhamel formulation
\begin{equation*}
\psi^\epsilon(t) = e^{it\p_x^2}\psi^\epsilon(0) - 2i\int_0^t e^{i(t-\tau)\p_x^2}\Bigl[V^\epsilon|\psi^\epsilon(\tau)|^2\psi^\epsilon(\tau)\Bigr]\,d\tau
\end{equation*}
of \eqref{regularized NLS}. This solution conserves both the mass and the corresponding energy,
\[
E^\epsilon[\psi^\epsilon] = \int_\R \tfrac12|\psi_x^\epsilon|^2 + \tfrac12|\psi^\epsilon|^4\,V^\epsilon\,dx.
\]
Moreover, for all times \(T>0\) the solution map \(H^1\ni\psi^\epsilon(0)\mapsto \psi^\epsilon\in  C([-T,T];H^1)\) is locally Lipschitz. Finally, if \(\psi^\epsilon(0)\in \Schwartz\) then \(\psi^\epsilon\in C^\infty(\R; \Schwartz)\).
\end{prop}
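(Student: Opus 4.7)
The plan is to treat \eqref{regularized NLS} as a standard semilinear Schr\"odinger equation with a smooth, bounded, compactly supported coefficient \(V^\epsilon\), so that the nonlinearity \(V^\epsilon|\psi^\epsilon|^2\psi^\epsilon\) is strongly \(H^1\)-subcritical. With this reduction in mind, I would follow the classical framework laid out in \cite{MR2002047}, isolating the Schwartz persistence claim as the only step that requires additional care.

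For local existence and uniqueness, I would run a contraction argument on a closed ball in \(C([-T,T];H^1)\) for the Duhamel map. The key estimate is
\[
\|V^\epsilon|\psi|^2\psi\|_{H^1}\lesssim_\epsilon \|\psi\|_{H^1}^3,
\]
together with the corresponding Lipschitz bound; both follow from the algebra property \eqref{alg} and from the fact that \(V^\epsilon\in \Test(\R)\) acts as a bounded multiplier on \(H^1\). The unitarity of \(e^{it\p_x^2}\) on \(H^1\) then makes the Duhamel map a contraction provided \(T=T(\epsilon,\|\psi^\epsilon(0)\|_{H^1})\) is small enough. The same estimates immediately give local Lipschitz continuity of the data-to-solution map, and a standard continuation argument yields a maximal existence interval.

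For globality and the conservation laws, I would first verify conservation of mass and energy for sufficiently regular solutions (say, with \(H^2\) initial data, for which the solution lies in \(C^1\) into \(L^2\) and the formal time derivatives of \(M\) and \(E^\epsilon\) are justified by integration by parts) and then extend to arbitrary \(H^1\) data by the local Lipschitz dependence. Since \(V^\epsilon\geq 0\), energy conservation combined with mass conservation furnishes an \emph{a priori} uniform-in-time bound on \(\|\psi^\epsilon(t)\|_{H^1}\); the blow-up alternative from the local theory then upgrades the local solutions to global ones.

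The Schwartz persistence claim is the step I would flag as the main obstacle, since it goes slightly beyond the textbook treatment. Following \cite{MR0641651}, the approach is to show by induction that \(\p_x^j J^k\psi^\epsilon(t)\in L^2\) for all \(j,k\geq 0\), where \(J = x + 2it\p_x\) is the Galilean vector field. Because \(J\) commutes with the free Schr\"odinger generator, each \(\p_x^j J^k\psi^\epsilon\) obeys a Duhamel equation whose source is a polynomial in the lower-order analogues multiplied by derivatives of \(V^\epsilon\); as \(V^\epsilon\in \Test(\R)\), all of the latter are bounded with compact support, so a Gronwall argument closes the induction. The genuine difficulty here is not analytic but combinatorial: careful bookkeeping of the commutator source terms generated each time a factor of \(\p_x\) or \(J\) is commuted past \(V^\epsilon|\psi^\epsilon|^2\psi^\epsilon\).
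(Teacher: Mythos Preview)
Your proposal is correct and follows exactly the approach the paper takes: the paper does not give an independent proof but simply cites \cite{MR2002047} for the well-posedness, conservation laws, and Lipschitz dependence, and \cite{MR0641651} for the Schwartz persistence, which is precisely the framework you outline. Your sketch supplies more detail than the paper itself does, and the identification of the \(J=x+2it\p_x\) vector-field argument as the mechanism behind the final statement matches the paper's attribution.
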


We wish to prove that $\psi^\epsilon(t)$ converges as $\epsilon\to0$.  A key first step is to obtain bounds that are uniform for \(\epsilon\in(0,1]\):

\begin{lem}\label{l:AP bds}
If \(\psi_0\in H^1\cap \X\) and \(\psi^\epsilon\in C(\R;H^1)\) is the solution of \eqref{regularized NLS} with initial data \(\psi^\epsilon(0) = \psi_0\), then \(\psi^\epsilon\in C(\R;\X)\) and there exists a constant \(C>0\) so that for all \(T>0\) we have the estimates
\begin{align}
\sup_{t\in \R}\|\psi^\epsilon(t)\|_{H^1}^2 &\leq C \Bigl[1 + \|\psi_0\|_\X\|\psi_0\|_{H^1}\Bigr]\|\psi_0\|_{H^1}^2 ,\label{AP bound} \\
\sup_{|t|\leq T}\|\psi^\epsilon(t)\|_{\X}^2 &\leq  \|\psi_0\|_\X^2 + CT\sup_{t\in \R}\|\psi^\epsilon(t)\|_{H^1}^2 ,\label{Main AP}\\
\sup_{|t|\leq T} \|(1 - \varphi^{\lambda})\psi^\epsilon(t)\|_{L^2}^2 &\leq \|(1 - \varphi^{\lambda})\psi_0\|_{L^2}^2 
	+ C\lambda T\sup_{t\in \R}\|\psi^\epsilon(t)\|_{H^1}^2 ,\label{tight L2}\\
\sup_{|t|\leq T}\|(1 - \varphi^{\lambda})\psi^\epsilon(t)\|_{\X}^2 &\leq \|(1 - \varphi^{\lambda})\psi_0\|_\X^2\label{tight}\\
&\quad + CT\<\cN_0^2\lambda\>\sup_{|t|\leq T}\Bigl[\|(1 - \varphi^{\lambda})\psi^\epsilon(t)\|_{L^2}\|\psi^{\epsilon}(t)\|_{H^1}\Bigr],  \notag
\end{align}
uniformly for \(0<\epsilon,\lambda\leq 1\).
\end{lem}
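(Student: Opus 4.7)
The plan is to prove all four bounds first for Schwartz initial data, where Proposition~\ref{p:regularized} guarantees \(\psi^\epsilon\in C^\infty(\R;\Schwartz)\) and every differentiation under the integral and integration by parts below is trivially justified; the estimates will then be transferred to general \(\psi_0\in H^1\cap\X\) by a density argument.

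For \eqref{AP bound} I would simply invoke conservation of mass and energy. Since \(V^\epsilon\geq 0\), one has \(\|\psi^\epsilon_x\|_{L^2}^2 \leq 2E^\epsilon[\psi^\epsilon] = 2E^\epsilon[\psi_0]\), and the quartic part of \(E^\epsilon[\psi_0]\) is controlled, via \eqref{energy lot} with \(s=1\), by a constant multiple of \(\|\psi_0\|_\X\|\psi_0\|_{H^1}^3\). Combining this with \(\|\psi^\epsilon\|_{L^2}=\|\psi_0\|_{L^2}\) yields \eqref{AP bound}.

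The remaining three estimates rest on a single identity: for any sufficiently regular nonnegative weight \(\rho(x)\), the quartic term \(V^\epsilon|\psi^\epsilon|^4\rho\) is real and vanishes after taking the imaginary part, leaving (after one integration by parts)
\[
\frac{d}{dt}\int_\R |\psi^\epsilon|^2\,\rho\,dx = 2\Im\int_\R \overline{\psi^\epsilon}\,\psi_x^\epsilon\,\rho'\,dx.
\]
Choosing \(\rho = w(x;\mu)\), invoking \(|w'|\leq 1\) from \eqref{1-Lip omega}, and applying Cauchy--Schwarz immediately gives \(\bigl|\tfrac{d}{dt}\|\psi^\epsilon\|_\X^2\bigr|\lesssim \|\psi^\epsilon\|_{H^1}^2\), which integrates to \eqref{Main AP}. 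Choosing instead \(\rho=(1-\varphi^\lambda)^2\), so that \(|\rho'|\lesssim\lambda\), yields \eqref{tight L2} in the same manner.

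The main obstacle will be \eqref{tight}, which corresponds to the choice \(\rho=(1-\varphi^\lambda)^2 w\). Here \(\rho'\) has two pieces,
\[
\rho' = -2(1-\varphi^\lambda)\lambda\varphi'(\lambda x)\,w + (1-\varphi^\lambda)^2 w',
\]
and the delicate contribution is the first, because \(w\) itself is unbounded and cannot be absorbed into a factor of \(\lambda\) as in \eqref{tight L2}. I will exploit the fact that \(\varphi'(\lambda x)\) is supported in \(\{1/\lambda\leq|x|\leq 2/\lambda\}\), on which \eqref{growth} provides the pointwise bound \(w(x;\mu)\leq \cN_0(\mu)^2+2/\lambda\); this is precisely what produces the factor \(\<\cN_0^2\lambda\>\) on \RHS{tight}, while the surviving \((1-\varphi^\lambda)\) factor gives the tight \(L^2\) norm of \(\psi^\epsilon\) on the right. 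The second piece is handled by \(\|(1-\varphi^\lambda)^2 w'\|_{L^\infty}\leq 1\), which inherits a \((1-\varphi^\lambda)\) cutoff on one factor of \(\psi^\epsilon\). For the transfer to \(H^1\cap\X\) data, I would approximate \(\psi_0\) by Schwartz functions \(\psi_{0,n}\to\psi_0\) in \(H^1\cap\X\); the local Lipschitz continuity of Proposition~\ref{p:regularized} gives \(H^1\)-convergence of the corresponding solutions, the \(\X\)-bounds are inherited in the limit by lower semicontinuity (Fatou), and the tail estimate \eqref{tight}, applied uniformly in \(n\) and combined with the uniform \(H^1\)-convergence, upgrades weak to strong convergence in \(\X\) on compact time intervals, thereby producing both the remaining estimates and the claimed continuity \(\psi^\epsilon\in C(\R;\X)\).
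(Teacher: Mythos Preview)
Your proposal is correct and follows essentially the same route as the paper: reduce to Schwartz data, use mass/energy conservation with \eqref{energy lot} for \eqref{AP bound}, and for the remaining three estimates differentiate $\int|\psi^\epsilon|^2\rho\,dx$ with $\rho=w$, $(1-\varphi^\lambda)^2$, $(1-\varphi^\lambda)^2w$ and bound $\rho'$ exactly as you describe (including the use of \eqref{growth} on the support of $\varphi'(\lambda\,\cdot)$ to produce $\langle\cN_0^2\lambda\rangle$). The only cosmetic difference is in the final step: the paper proves $\X$-continuity directly for the limiting solution by writing $\|\psi^\epsilon(t)-\psi^\epsilon(\tau)\|_\X \leq \|\varphi^\lambda[\psi^\epsilon(t)-\psi^\epsilon(\tau)]\|_\X + 2\sup_{|t|\leq T}\|(1-\varphi^\lambda)\psi^\epsilon\|_\X$ and sending $\lambda\to0$ via \eqref{tight L2}--\eqref{tight}, whereas you obtain it as a uniform-in-time limit of the Schwartz approximants; both arguments rest on the same tail estimates.
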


\bpf
Combining an approximation argument with the well-posedness provided by Proposition~\ref{p:regularized}, we see that it suffices to prove these bounds for \(\psi_0\in \Schwartz\).  As noted in Proposition~\ref{p:regularized}, this ensures that that the solution \(\psi^\epsilon\in C^\infty(\R;\Schwartz)\).

By conservation of mass and energy,
\begin{align*}
\sup_{0<\epsilon\leq 1}\sup_{t\in \R}\|\psi^\epsilon(t)\|_{H^1}^2\lesssim \sup_{0<\epsilon\leq 1}\sup_{t\in\R}\Bigl[M[\psi^\epsilon(t)] + E^\epsilon[\psi^\epsilon(t)]\Bigr] &= M[\psi_0] + \sup_{0<\epsilon\leq 1}E^\epsilon[\psi_0].
\end{align*}
Thus by \eqref{energy lot}, we have 
\begin{align*}
\sup_{0<\epsilon\leq 1}\sup_{t\in \R}\|\psi^\epsilon(t)\|_{H^1}^2  &\lesssim \Bigl[1 + \|\psi_0\|_\X\|\psi_0\|_{H^1}\Bigr]\|\psi_0\|_{H^1}^2,
\end{align*}
which proves \eqref{AP bound}. 

As \(\psi^\epsilon(t)\in \Schwartz\), the estimate \eqref{growth} ensures that \(\|\psi^\epsilon(t)\|_\X<\infty\) for all \(t\in \R\). We may then integrate by parts to obtain 
\begin{equation}
\tfrac d{dt}\|\psi^\epsilon\|_\X^2 = -2\Im \int_\R \bigl(\overline{\psi^\epsilon} \psi_x^\epsilon\bigr)_x\,w\,dx = 2\Im \int_\R \overline{\psi^\epsilon} \psi_x^\epsilon\,w_x\,dx,
\end{equation}
and so, by \eqref{1-Lip omega},
\[
\sup_{|t|\leq T} \|\psi^\epsilon(t)\|_\X^2 \leq \|\psi_0\|_\X^2 + 2T\sup_{|t|\leq T}\|\psi^\epsilon(t)\|_{L^2}\|\psi^\epsilon(t)\|_{H^1},
\]
which proves \eqref{Main AP}.   The estimates \eqref{tight L2} and \eqref{tight} follow from a parallel argument, with \(w\) replaced by \((1 - \varphi^{\lambda})^2\) and \((1 - \varphi^{\lambda})^2w\), respectively.  In the second instance, we also use that
\[
\bigl| [ (1-\varphi^{\lambda})^2w ]_x (x)\bigr| \lesssim (1-\varphi^{\lambda}(x))\bigl[ 1 + \lambda |\varphi_x(\lambda x)| |w(x)| \bigr]
	\lesssim\<\cN_0^2\lambda\> (1-\varphi^{\lambda}(x)),
\]
which follows from \eqref{growth} and \eqref{1-Lip omega}.

Given initial data \(\psi_0\in H^1\cap \X\), the estimate \eqref{Main AP} ensures that the solution \(\psi^\epsilon\in C(\R;H^1) \cap L^\infty([-T,T];\X)\). To complete the proof we have to verify that $\psi^\epsilon\in C([-T,T];\X)$.  To this end, we take \(t,\tau\in[-T,T]\), \(0<\lambda\leq 1\), and use \eqref{growth} to bound
\begin{align*}
\|\psi^\epsilon(t) - \psi^\epsilon(\tau)\|_\X &\leq \|\varphi^{\lambda}[\psi^\epsilon(t) - \psi^\epsilon(\tau)]\|_\X + 2\sup_{|t|\leq T}\|(1 - \varphi^{\lambda})\psi^\epsilon(t)\|_\X\\
&\lesssim \bigl(\cN_0 + \tfrac1{\sqrt{\lambda}}\bigr)\|\psi^\epsilon(t) - \psi^\epsilon(\tau)\|_{L^2} + \sup_{|t|\leq T}\|(1 - \varphi^{\lambda})\psi^\epsilon(t)\|_\X.
\end{align*}
As the map \(\R\ni t\mapsto \psi^\epsilon(t)\) is $L^2$-continuous, we deduce that
\[
\limsup_{t\to\tau}\|\psi^\epsilon(t) - \psi^\epsilon(\tau)\|_\X\lesssim \sup_{|t|\leq T}\|(1 - \varphi^{\lambda})\psi^\epsilon(t)\|_\X.
\]
Employing \eqref{AP bound} and \eqref{tight L2} in  \eqref{tight}, we send \(\lambda\to0\) and obtain
\[
\lim_{t\to\tau}\|\psi^\epsilon(t) - \psi^\epsilon(\tau)\|_\X = 0,
\]
which proves that $\psi^\epsilon\in C([-T,T];\X)$.
\epf

Using these uniform estimates for \(\psi^\epsilon\), we now prove existence of local-in-time solutions to \eqref{NLS}:

\begin{prop}\label{p:local existence}
Given \(\psi_0\in H^1\cap \X\),  there exists $T>0$ and a solution \(\psi\in C([-T,T];H_w^1\cap H^s\cap \X)\), for every $0\leq s<1$, to the Duhamel formulation \eqref{Duhamel NLS} of \eqref{NLS}.  
The solution $\psi(t)$ conserves mass and satisfies the energy inequality
\eq{energy ineq}{
E\bigl[\psi(t)\bigr]\leq E\bigl[\psi_0\bigr],
}
for all $|t|\leq T$. Moreover, if $\psi^\epsilon$ is a family of solutions to \eqref{regularized NLS} with $\psi^\epsilon(0) \to \psi_0$ in $H^1\cap\X$, then 
\begin{equation}\label{local convergence}
\lim_{\epsilon\to0}\ \sup_{|t|\leq T} \ \bigl\|\psi^\epsilon(t) - \psi(t)\bigr\|_{H^s\cap\X} =0 \qtq{for each} 0\leq s<1.
\end{equation}
\end{prop}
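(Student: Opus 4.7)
The plan is to realize $\psi$ as the $\epsilon \to 0$ limit of the regularized solutions $\psi^\epsilon$ from Proposition~\ref{p:regularized}, taking $\psi^\epsilon(0) \to \psi_0$ in $H^1 \cap \X$. Lemma~\ref{l:AP bds} immediately supplies, on any fixed interval $[-T,T]$, a uniform bound $\sup_{0 < \epsilon \leq 1}\sup_{|t|\leq T}\bigl(\|\psi^\epsilon(t)\|_{H^1} + \|\psi^\epsilon(t)\|_\X\bigr) \leq M$. In addition, estimate \eqref{tight L2} combined with dominated convergence on the initial data, followed by \eqref{tight}, yields uniform tightness: $\sup_{0 < \epsilon \leq 1}\sup_{|t|\leq T}\|(1 - \varphi^\lambda)\psi^\epsilon(t)\|_\X \to 0$ as $\lambda \to 0$.

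The central step is to show that $\{\psi^\epsilon\}$ is Cauchy in $C([-T,T]; H^{-1})$, possibly after shrinking $T$. I plan to write the Duhamel difference for $\psi^\epsilon - \psi^\eta$ and split
\[
V^\epsilon|\psi^\epsilon|^2\psi^\epsilon - V^\eta|\psi^\eta|^2\psi^\eta = \bigl(|\psi^\epsilon|^2\psi^\epsilon - |\psi^\eta|^2\psi^\eta\bigr)V^\eta + |\psi^\epsilon|^2\psi^\epsilon\varphi^\epsilon(d\mu^\epsilon - d\mu^\eta) + |\psi^\epsilon|^2\psi^\epsilon(\varphi^\epsilon - \varphi^\eta)d\mu^\eta,
\]
then estimate the three pieces in $H^{-1}$ using, respectively, a $\mu^\eta$-analog of \eqref{measure nonlin 2} (yielding a bound of the form $M^2\|\psi^\epsilon - \psi^\eta\|_{L^\infty}$), the estimate \eqref{measure nonlin 3} (gaining a factor of $\sqrt\eta$), and the uniform tightness (since $\varphi^\epsilon - \varphi^\eta$ vanishes on $\{|x|\leq 1/\eta\}$). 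Interpolating $\|\psi^\epsilon - \psi^\eta\|_{L^\infty} \lesssim M^{3/4}\|\psi^\epsilon - \psi^\eta\|_{H^{-1}}^{1/4}$ and closing via a Gronwall-type argument in the spirit of \eqref{Gronwall helper} will give the required Cauchy bound.

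Once $\psi^\epsilon \to \psi$ in $C([-T,T]; H^{-1})$, interpolation against the uniform $H^1$ bound produces strong convergence in $C([-T,T]; H^s)$ for each $s < 1$; combining this with the uniform tightness upgrades the convergence to $C([-T,T]; \X)$. I will then pass to the $\epsilon \to 0$ limit in the Duhamel equation using \eqref{energy lot cvgce} to verify that $\psi$ solves \eqref{Duhamel NLS}. Mass conservation is inherited from $L^2$-convergence, and the energy inequality follows by combining weak $H^1$-lower-semicontinuity of the kinetic term with \eqref{energy lot cvgce} for the potential term, giving $E[\psi(t)] \leq \liminf_\epsilon E^\epsilon[\psi^\epsilon(t)] = \lim_\epsilon E^\epsilon[\psi^\epsilon(0)] = E[\psi_0]$. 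Weak $H^1$-continuity of $\psi$ then falls out of its $H^{-1}$-continuity together with the uniform $H^1$ bound. The main obstacle is the $(\varphi^\epsilon - \varphi^\eta)d\mu^\eta$ piece, where the cutoff mismatch sits against a possibly rich measure $\mu^\eta$; handling this is exactly what forces us to rely on the uniform tightness supplied by \eqref{tight L2} and \eqref{tight}.
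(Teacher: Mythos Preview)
Your overall strategy---constructing $\psi$ as a limit of the regularized solutions, using the uniform bounds and tightness from Lemma~\ref{l:AP bds}, and then upgrading convergence via interpolation and tightness---matches the paper's. The three-term decomposition of the nonlinearity and the post-processing (passing to the limit in Duhamel, mass conservation, energy inequality via lower semicontinuity and \eqref{energy lot cvgce}, weak $H^1$-continuity) are all correct and essentially identical to the paper's treatment.

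However, there is a genuine gap in the central Cauchy step. After estimating the three nonlinear pieces in $H^{-1}$ and integrating the Duhamel formula, you arrive at an inequality of the shape
\[
U:=\sup_{|t|\leq T}\|\psi^\epsilon(t)-\psi^\eta(t)\|_{H^{-1}} \ \lesssim\ \delta(\epsilon,\eta) + CTM^2 \sup_{|t|\leq T}\|\psi^\epsilon(t)-\psi^\eta(t)\|_{L^\infty}.
\]
Your plan is to insert the interpolation $\|\cdot\|_{L^\infty}\lesssim M^{3/4}\|\cdot\|_{H^{-1}}^{1/4}$ and close. But this yields a \emph{sublinear} relation $U\lesssim \delta + CTM^{11/4}U^{1/4}$, and such an inequality does not force $U\to 0$ as $\delta\to 0$: even with $\delta=0$ it permits any $U\in\bigl[0,(CTM^{11/4})^{4/3}\bigr]$. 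This is the classical non-uniqueness phenomenon for $u'=u^{1/4}$, $u(0)=0$. A Gronwall argument in the spirit of \eqref{Gronwall helper}---which is \emph{linear} in $\|\cdot\|_{L^\infty}^4$---cannot be run on a sublinear recurrence, no matter how small $T$ is chosen.

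The paper circumvents this by closing the recurrent term in $L^\infty$ rather than $H^{-1}$: the difference piece $\III$ is estimated via the dispersive bound \eqref{dispersive} and \eqref{measure nonlin 1}, producing $\sqrt T\,AB\sup\|\psi^\eta-\psi^\epsilon\|_{L^\infty}$ with a \emph{linear} dependence that can be absorbed once $\sqrt T\,AB\ll1$. Since the piece $\II$ only admits an $H^{-1}$ bound via \eqref{measure nonlin 3}, the paper couples the two norms through a Littlewood--Paley split $P_{\leq N}+P_{>N}$: Bernstein converts the $H^{-1}$ control on $\II$ into low-frequency $L^\infty$ control, while high frequencies are handled by the uniform $H^1$ bound and $N^{-1/2}A\to0$. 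Your $H^{-1}$ framework is well suited to $\II$ and to the tightness argument for the cutoff mismatch, but for the recurrent term you need the dispersive estimate (or some other device producing a linear recurrence) to close.
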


\begin{proof}
We will construct $\psi(t)$ as a limit of solutions $\psi^\epsilon(t)$ to \eqref{regularized NLS}.  For this purpose, it would suffice to consider solutions $\psi^\epsilon$ to \eqref{regularized NLS} with $\psi^\epsilon(0)=\psi_0$, rather than with a sequence of initial data $\psi^\epsilon(0)\to\psi_0$ in $H^1\cap \X$.  However, the extra generality we consider here will be useful when proving \eqref{local convergence}.  It is no loss of generality to demand that
\begin{equation}\label{not too big}
\sup_{0<\epsilon\leq 1}\|\psi^\epsilon(0) \|_{H^1} \leq \|\psi_0\|_{H^1} + 1 \qtq{and } \sup_{0<\epsilon\leq 1}\|\psi^\epsilon(0) \|_{\X} \leq \|\psi_0\|_{\X} + 1.
\end{equation}

Given \(T>0\), we define the control quantities
\[
A = \sup_{0<\epsilon\leq 1}\sup_{|t|\leq T}\|\psi^\epsilon(t)\|_{H^1}\qtq{and}B = \sup_{0<\epsilon\leq 1}\sup_{|t|\leq T}\|\psi^\epsilon(t)\|_\X.
\]
Recalling \eqref{AP bound} and \eqref{not too big}, we have 
\begin{align}
A^2 &\lesssim \bigl[1 + \|\psi_0\|_\X\bigr]\bigl[1 + \|\psi_0\|_{H^1}\bigr]^3,\label{A bound}
\end{align}
and using \eqref{Main AP} as well, we have
\eq{B bound}{
B^2\leq \sup_{0<\epsilon\leq 1}\|\psi^\epsilon(0)\|_\X^2 + CT A^2
\lesssim [1+T]\bigl[1 + \|\psi_0\|_\X\bigr]^2\bigl[1 + \|\psi_0\|_{H^1}\bigr]^3.
}

Taking \(0<\epsilon\leq \eta\leq 1\), we use the Duhamel formula \eqref{Duhamel NLS} to write
\begin{align*}
\psi^\eta(t) - \psi^{\epsilon}(t) &= e^{it\partial_x^2}[\psi^\eta(0) - \psi^{\epsilon}(0)]  +  I + \II + \III,
\end{align*}
where
\begin{align*}
I&:= - 2i\int_0^t e^{i(t-\tau)\p_x^2}\Bigl[\bigl(\varphi^\eta - \varphi^{\epsilon}\bigr) |\psi^\eta(\tau)|^2\psi^\eta(\tau) d\mu^\eta \Bigr]\,d\tau, \\
\II&:= - 2i\int_0^t e^{i(t-\tau)\p_x^2}\Bigl[\varphi^{\epsilon} |\psi^\eta(\tau)|^2\psi^\eta(\tau) \bigl(d\mu^\eta - d\mu^{\epsilon}\bigr) \Bigr]\,d\tau, \\
\III &:= - 2i\int_0^t e^{i(t-\tau)\p_x^2}\Bigl[\varphi^{\epsilon}\bigl[ |\psi^\eta(\tau)|^2\psi^\eta(\tau) - |\psi^{\epsilon}(\tau)|^2\psi^{\epsilon}(\tau)\bigr]d\mu^{\epsilon} \Bigr]\,d\tau.
\end{align*}

Given dyadic \(N\geq 1\) we define the Littlewood--Paley projection
\[
\widehat{P_{\leq N}f}(\xi) = \varphi\bigl(\tfrac\xi N\bigr)\widehat f(\xi),
\]
where $\varphi$ is the same bump function \(\varphi\in \Test(-2,2)\) employed in \eqref{V eps}.  We also use the notation \(P_{>N} = 1 - P_{\leq N}\). We begin by bounding the low frequency part of each of the terms $I,\II,\III$ in both \(L^\infty\) and \(H^s\) for $0\leq s<1$.

Using the dispersive estimate \eqref{dispersive} and then \eqref{measure nonlin 1}, we bound the first term by
\begin{align*}
\sup_{|t|\leq T}\|P_{\leq N}I\|_{L^\infty} &\lesssim \sup_{|t|\leq T}\left|\int_0^t \tfrac1{\sqrt{|t-\tau|}}\bigl\|(\varphi^\eta - \varphi^{\epsilon}\bigr)
	|\psi^\eta(\tau)|^2\psi^\eta(\tau) \dmut \bigr\|_{L^1}\,d\tau\right|\\
&\lesssim  \sqrt T AB\sup_{|t|\leq T}\|(\varphi^\eta - \varphi^{\epsilon})\psi^\eta(t)\|_{L^\infty}.
\end{align*}
Replacing the dispersive estimate by Bernstein's inequality, we similarly obtain
\begin{align*}
\sup_{|t|\leq T}\|P_{\leq N}I\|_{H^s} &\lesssim N^{s+\frac12}T \sup_{|t|\leq T}
	\bigl\|(\varphi^\eta - \varphi^{\epsilon}\bigr)  |\psi^\eta(t)|^2\psi^\eta(t) \dmut\bigr\|_{L^1}\\
&\lesssim N^{s+\frac12}T AB\sup_{|t|\leq T}\|(\varphi^\eta - \varphi^{\epsilon})\psi^\eta(t)\|_{L^\infty},
\end{align*}
for any $0\leq s<1$.

The estimates for the third term are essentially identical. Applying the dispersive estimate \eqref{dispersive} followed by \eqref{measure nonlin 1} we get
\begin{align*}
\sup_{|t|\leq T}\|P_{\leq N}\III\|_{L^\infty}\lesssim  \sqrt T AB\sup_{|t|\leq T}\|\psi^\eta(t) - \psi^{\epsilon}(t)\|_{L^\infty},
\end{align*}
while replacing the dispersive estimate by Bernstein's inequality yields
\[
\sup_{|t|\leq T}\|P_{\leq N}\III\|_{H^s} \lesssim N^{s+\frac12} T AB\sup_{|t|\leq T}\|\psi^\eta(t) - \psi^{\epsilon}(t)\|_{L^\infty}.
\]

For the remaining term, we first apply \eqref{measure nonlin 3} to bound
\begin{align*}
\sup_{|t|\leq T}\|P_{\leq N}\II\|_{H^{-1}}&\lesssim T\sup_{|t|\leq T}\bigl\| |\psi^\eta(t)|^2\psi^\eta(t)  \bigl(d\mu^\eta - d\mu^{\epsilon}\bigr) \bigr\|_{H^{-1}}\lesssim T A^2B\sqrt \eta.
\end{align*}
We may then use Bernstein's inequality to get
\begin{align*}
\sup_{|t|\leq T}\|P_{\leq N}\II\|_{L^\infty} &\lesssim N^{\frac32} T A^2B\sqrt \eta,\\
\sup_{|t|\leq T}\|P_{\leq N}\II\|_{H^s} &\lesssim N^{s+1} T A^2B\sqrt \eta.
\end{align*}

We now combine our estimates at low frequencies with Bernstein's inequality at high frequencies and \eqref{GN} to bound
\begin{align}
\sup_{|t|\leq T} \|\psi^\eta(t) & - \psi^{\epsilon}(t)\|_{L^\infty}\notag\\
&\quad\lesssim \|\psi^\eta(0) - \psi^{\epsilon}(0)\|_{H^1} +  \sup_{|t|\leq T}\|P_{>N}\psi^\eta(t)\|_{L^\infty} + \sup_{|t|\leq T}\|P_{>N}\psi^{\epsilon}(t)\|_{L^\infty} \notag\\
&\quad\qquad + \sup_{|t|\leq T}\|P_{\leq N}I\|_{L^\infty} + \sup_{|t|\leq T}\|P_{\leq N}\II\|_{L^\infty} + \sup_{|t|\leq T}\|P_{\leq N}\III\|_{L^\infty}\notag\\
&\quad\lesssim \|\psi^\eta(0) - \psi^{\epsilon}(0)\|_{H^1} +  N^{-\frac12}A +\sqrt T AB\sup_{|t|\leq T}\|(\varphi^\eta - \varphi^{\epsilon})\psi^\eta(t)\|_{L^\infty}\label{inf diff}\\
&\quad\qquad  + N^{\frac32} T A^2B\sqrt\eta + \sqrt T AB\sup_{|t|\leq T}\|\psi^\eta(t) - \psi^{\epsilon}(t)\|_{L^\infty}.\notag
\end{align}
We now choose \(T>0\) sufficiently small (depending only on \(\psi_0\)) to ensure that
\[
\sqrt T AB\ll1,
\]
and hence the final term on \RHS{inf diff} can be absorbed into the left-hand side. From \eqref{GN} and \eqref{tight L2}, we may bound the third term on \RHS{inf diff} as follows:
\begin{align*}
&\sup_{|t|\leq T}\|(\varphi^\eta - \varphi^{\epsilon})\psi^\eta(t)\|_{L^\infty}^4\\
&\qquad\lesssim \sup_{|t|\leq T}\biggl\{\Bigl[\|(1 - \varphi^{\eta})\psi^\eta(t)\|_{L^2}^2 + \|(1 - \varphi^{\epsilon})\psi^\eta(t)\|_{L^2}^2\Bigr]
	\|(\varphi^\eta - \varphi^{\epsilon})\psi^\eta(t)\|_{H^1}^2\biggr\}\\
&\qquad\lesssim \Bigl[\|(1 - \varphi^{\eta})\psi_0\|_{L^2}^2 + \|(1 - \varphi^{\epsilon})\psi_0\|_{L^2}^2 + \eta T A^2\Bigr]A^2.
\end{align*}
This converges to zero as \(\eta,\epsilon\to0\). As a consequence, we may pass to the limit in \eqref{inf diff} to obtain
\[
\limsup_{\eta,\epsilon\to0}\sup_{|t|\leq T}\|\psi^\eta(t) - \psi^{\epsilon}(t)\|_{L^\infty}\lesssim N^{-\frac12}A.
\]
Taking the limit as \(N\to\infty\), we see that \(\psi^\epsilon\) converges in \(C([-T,T];L^\infty)\) as $\epsilon\to0$.  Let $\psi$ denote this limit.

Arguing as in \eqref{inf diff} and choosing \(T\) so that $\sqrt{T} AB\ll1$, but using our \(H^s\) low frequency bounds in place of those in \(L^\infty\),  we get
\begin{align*}
\sup_{|t|\leq T}\|\psi^\eta(t) - \psi^{\epsilon}(t)\|_{H^s} &\lesssim \|\psi^\eta(0) - \psi^{\epsilon}(0)\|_{H^s} + N^{s-1}A \\
&\quad + N^{s+\frac12} \sqrt T\sup_{|t|\leq T}
	\|(\varphi^\eta - \varphi^{\epsilon})\psi^\eta(t)\|_{L^\infty}\\
&\quad + N^{s+1} \sqrt T A\sqrt\eta + N^{s+\frac12}\sqrt T\sup_{|t|\leq T}\|\psi^\eta(t) - \psi^{\epsilon}(t)\|_{L^\infty}.
\end{align*}
Passing to the limit as \(\eta,\epsilon\to 0\)  and then \(N\to\infty\) now shows that \(\psi^\epsilon\to\psi\) in \(C([-T,T];H^s)\) for all $0\leq s <1$.

As the solutions \(\psi^\epsilon\) conserve mass, we may pass to the limit as \(\epsilon \to0\) to obtain
\[
\|\psi(t)\|_{L^2} = \|\psi_0\|_{L^2}\qt{for all \(|t|\leq T\)}.
\]

In view of \eqref{growth} and the \(C([-T,T];L^2)\) convergence already proved,
\[
\limsup_{\eta,\epsilon\to 0}\sup_{|t|\leq T} \bigl\| \varphi^{\lambda}(\psi^\eta(t) - \psi^{\epsilon}(t)) \bigr\|_{\X}^2
\lesssim \Bigl(\cN_0^2 + \tfrac1{\lambda}\Bigr)  \limsup_{\eta,\epsilon\to 0} \bigl\| \psi^\eta - \psi^{\epsilon} \bigr\|_{C_tL^2}^2
=0
\]
for any fixed $\lambda >0$.  Thus, in order to demonstrate that \(\psi^\epsilon \to\psi\) in \(C([-T,T];\X)\), it suffices to show that
\begin{equation}\label{777}
\limsup_{\lambda\to 0}\  \sup_{0< \epsilon\leq 1} \ \sup_{|t|\leq T} \bigl\| [1-\varphi^{\lambda}] \psi^{\epsilon}(t) \bigr\|_{\X}^2 =0.
\end{equation}
To this end, we first employ \eqref{tight L2} to obtain
\begin{equation*}
\limsup_{\lambda\to 0}\  \sup_{0< \epsilon\leq 1} \ \sup_{|t|\leq T} \bigl\| [1-\varphi^{\lambda}] \psi^{\epsilon}(t) \bigr\|_{L^2}^2
\leq \limsup_{\lambda\to 0}\  \sup_{0< \epsilon\leq 1} \bigl\| [1-\varphi^{\lambda}] \psi^{\epsilon}(0) \bigr\|_{L^2}^2 =0.
\end{equation*}
Using this in \eqref{tight}, we may then conclude that
\begin{equation*}
\text{LHS\eqref{777}} \leq \limsup_{\lambda\to 0}\  \sup_{0\leq \epsilon\leq 1} \bigl\| [1-\varphi^{\lambda}] \psi^{\epsilon}(0) \bigr\|_{\X}^2 =0.
\end{equation*}

Weak compactness ensures that for each time \(t\in[-T,T]\) we have weak convergence \(\psi^\epsilon(t)\rightharpoonup \psi(t)\) in \(H^1\). As \(\psi^\epsilon\in C([-T,T];H^1)\), it follows that \(\psi\in C([-T,T];H_w^1)\).

Fixing $\frac23\leq s <1$, we use \eqref{energy lot} to obtain
\begin{align*}
&\left|\int_\R |\psi^\epsilon|^4\,V^\epsilon\,dx - \int_\R |\psi|^4\,d\mu\right|\\
&\qquad\leq \left|\int_\R \Bigl[|\psi^\epsilon|^4 - |\psi|^4\Bigr]\,V^\epsilon\,dx\right| + \left|\int_\R |\psi|^4\,V^\epsilon\,dx - \int_\R |\psi|^4\,d\mu\right|\\
&\qquad\lesssim\Bigl[\|\psi^\epsilon\|_{H^s\cap \X}^3 + \|\psi\|_{H^s\cap \X}^3\Bigr] \|\psi^\epsilon - \psi\|_{H^s\cap \X} + \left|\int_\R |\psi|^4\,V^\epsilon\,dx - \int_\R |\psi|^4\,d\mu\right|,
\end{align*}
which then converges to zero as \(\epsilon \to0\) by \eqref{energy lot cvgce}. As a consequence, for all \(|t|\leq T\) we have
\begin{align}\label{enerquality}
E\bigl[\psi(t)\bigr] &\leq  \liminf_{\epsilon\to 0}E^\epsilon[\psi^\epsilon(t)] = \liminf_{\epsilon\to 0}E^\epsilon\bigl[\psi_0\bigr] = E\bigl[\psi_0\bigr],
\end{align}
giving us the energy inequality \eqref{energy ineq}.

To complete the proof, we have one final task: verifying that \(\psi\) is a solution to \eqref{Duhamel NLS}.  For any \(\phi\in \Schwartz\), we have
\begin{equation*}
\|e^{-i(t-\tau)\p_x^2}\phi\|_{H^s} = \|\phi\|_{H^s}\qtq{and}\||x| e^{-i(t-\tau)\p_x^2}\phi\|_{L^2}\lesssim \||x|\phi\|_{L^2} + |t - \tau|\|\phi\|_{H^1};
\end{equation*}
the second estimate here is a consequence of the identity
\[
x e^{-i(t-\tau)\p_x^2} \phi = e^{-i(t-\tau)\p_x^2}[x\phi + 2i(t-\tau)\p_x\phi].
\]
In this way, \eqref{growth} yields
\[
\|e^{-i(t-\tau)\p_x^2}\phi\|_{H^s\cap \X}\lesssim_\phi \cN_0 + \sqrt{|t - \tau|}.
\]
We may then argue as in the proof of \eqref{energy ineq}, using \eqref{energy lot} and \eqref{energy lot cvgce} to show that for all \(|t|\leq T\) we have
\begin{align*}
&\lim_{\epsilon\to 0}\left\<\phi,\int_0^t e^{i(t-\tau)\p_x^2}\Bigl[V^\epsilon|\psi^\epsilon(\tau)|^2\psi^\epsilon(\tau)\Bigr]\,d\tau\right\>\\
&\qquad= \lim_{\epsilon \to 0}\int_0^t\int_\R \overline{e^{-i(t-\tau)\p_x^2}\phi} \, V^\epsilon|\psi^\epsilon(\tau)|^2\psi^\epsilon(\tau)\,dx\,d\tau\\
&\qquad= \int_0^t\int_\R \overline{e^{-i(t-\tau)\p_x^2}\phi} \, |\psi(\tau)|^2\psi(\tau)\,d\mu\,d\tau\\
&\qquad= \left\<\phi,\int_0^t e^{i(t-\tau)\p_x^2}\Bigl[|\psi(\tau)|^2\psi(\tau)d\mu\Bigr]\,d\tau\right\>,
\end{align*}
so \(\psi\) satisfies \eqref{Duhamel NLS} in the sense of (tempered) distributions.
\epf

To complete the proof of the existence of solutions to \eqref{NLS} as stated in Theorem~\ref{t:deterministic}, we must extend the local-in-time solution constructed in Proposition~\ref{p:local existence} to a global-in-time solution.  We then combine the uniqueness result from Proposition~\ref{p:weak Lipschitz} with the energy inequality \eqref{energy ineq} to improve the time continuity properties of our solution.  Following this strategy, we will prove the following:

\begin{prop}\label{p:global existence}
Given \(\psi_0\in H^1\cap \X\) there exists a unique strong solution \(\psi\in C(\R;H^1\cap\X)\) of \eqref{NLS}. This solution conserves both the mass and the energy. Moreover, if $\psi^\epsilon$ denote the solutions to \eqref{regularized NLS} with initial data $\psi_0$, then
\begin{equation}\label{global convergence}
\lim_{\epsilon\to0}\ \sup_{|t|\leq T} \ \bigl\|\psi^\epsilon(t) - \psi(t)\bigr\|_{H^1\cap\X} =0 \qtq{for each} 0<T<\infty.
\end{equation} 
\end{prop}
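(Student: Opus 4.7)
The plan is to build on Proposition~\ref{p:local existence} (local existence with energy inequality), Proposition~\ref{p:weak Lipschitz} (uniqueness), and Lemma~\ref{l:measure ests}. First, for global existence, the local solution \(\psi\in C([-T_0,T_0]; H_w^1\cap H^s\cap\X)\) from Proposition~\ref{p:local existence} satisfies mass conservation and the energy inequality \eqref{energy ineq}; since both terms of \eqref{Emu} are non-negative, this gives a uniform \(H^1\)-bound \(\|\psi(t)\|_{H^1}\lesssim_{\psi_0}1\). Passing to the limit \(\epsilon\to0\) in \eqref{Main AP} gives \(\|\psi(t)\|_\X^2\lesssim\|\psi_0\|_\X^2+C|t|\), so the \(H^1\cap\X\)-norm grows only polynomially. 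The local existence time in Proposition~\ref{p:local existence} depends only on this norm, so iterating and gluing by Proposition~\ref{p:weak Lipschitz} extends \(\psi\) to a global solution in \(C(\R; H_w^1\cap H^s\cap\X)\) that conserves mass and satisfies a local energy inequality at every base point.

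For energy conservation, fix \(t_0\in\R\) and apply Proposition~\ref{p:local existence} with initial data \(\psi(t_0)\); by uniqueness the resulting local solution coincides with \(\psi\), giving \(E[\psi(t)]\leq E[\psi(t_0)]\) for \(|t-t_0|\) small. Applied at every \(t_0\), this shows \(t\mapsto E[\psi(t)]\) is locally constant, hence \(E[\psi(t)]=E[\psi_0]\) throughout. The identity
\[
\|\psi(t)\|_{H^1}^2 = \|\psi_0\|_{L^2}^2 + 2E[\psi_0] - \int_\R |\psi(t)|^4\,d\mu,
\]
combined with the strong \(H^s\cap\X\)-continuity of \(\psi\) and \eqref{energy lot}, makes \(t\mapsto\|\psi(t)\|_{H^1}\) continuous. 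Coupled with the weak \(H^1\)-continuity of \(\psi\), the Hilbert-space identity \(\|f_n-f\|^2=\|f_n\|^2+\|f\|^2-2\Re\<f_n,f\>\) delivers strong \(H^1\)-continuity, and strong \(\X\)-continuity is inherited from Proposition~\ref{p:local existence} via the same iteration, so \(\psi\in C(\R; H^1\cap\X)\).

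For the convergence \eqref{global convergence}, I would start on \([-T_0,T_0]\), where Proposition~\ref{p:local existence} gives \(\psi^\epsilon\to\psi\) in \(C_t(H^s\cap\X)\). Using \eqref{energy lot} on \(\psi^\epsilon-\psi\), together with \eqref{energy lot cvgce} at \(\psi_0\) and the conservation \(E^\epsilon[\psi^\epsilon(t)]=E^\epsilon[\psi_0]\), the nonlinear energies \(\int|\psi^\epsilon(t)|^4V^\epsilon\,dx\) converge uniformly in \(t\) to \(\int|\psi(t)|^4\,d\mu\); in view of the identity above, this gives \(\|\psi^\epsilon(t)\|_{H^1}\to\|\psi(t)\|_{H^1}\) uniformly on \([-T_0,T_0]\). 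Upgrading to uniform strong \(H^1\)-convergence proceeds by contradiction: a failure would yield \(\epsilon_n\to0\) and \(t_n\to t_*\) with \(\|\psi^{\epsilon_n}(t_n)-\psi(t_n)\|_{H^1}\geq\delta\), yet strong \(H^1\)-continuity gives \(\psi(t_n)\to\psi(t_*)\), and the uniform \(H^1\)-bound with \(L^2\)-convergence gives \(\psi^{\epsilon_n}(t_n)\rightharpoonup\psi(t_*)\) weakly with matching \(H^1\)-norms, forcing strong \(H^1\)-convergence and a contradiction. Once \(\psi^\epsilon(T_0)\to\psi(T_0)\) in \(H^1\cap\X\), Proposition~\ref{p:local existence} re-applies at \(T_0\) and the scheme iterates exactly as in the first step, yielding \eqref{global convergence} on any \([-T,T]\).

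The hard part will be the last step: turning the local \(H^s\cap\X\)-convergence produced by Proposition~\ref{p:local existence} into local \(H^1\)-convergence. It rests on coupling the conserved energy identity with a weak-compactness argument in a manner that is uniform in time, which is the key technical input beyond the machinery already developed.
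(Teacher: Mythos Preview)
Your approach is essentially the paper's: iterate Proposition~\ref{p:local existence} for global existence, use time-reversal plus uniqueness to upgrade the energy inequality to equality, deduce strong $H^1$-continuity via Radon--Riesz, and establish \eqref{global convergence} by a weak-convergence-plus-norm-convergence contradiction argument. Two points deserve tightening.

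First, the step ``$E[\psi(t)]\leq E[\psi(t_0)]$ for $|t-t_0|$ small at every $t_0$ $\Rightarrow$ $E$ is locally constant'' is not valid as stated: a function with a local maximum at every point need not be constant. What makes it work here is that on compact time intervals the $H^1\cap\X$-norm of $\psi$ is bounded, so the local existence time from Proposition~\ref{p:local existence} is bounded below uniformly; with a \emph{uniform} $\delta$ the two-sided inequality does force constancy. The paper sidesteps this by first building the energy inequality into the global continuation (so $E[\psi(t)]\leq E[\psi_0]$ for all $t$), then obtaining a contradiction from a single global time-shift $\phi(t)=\psi(T+t)$.

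Second, your claim that $\int|\psi^\epsilon(t)|^4V^\epsilon\,dx\to\int|\psi(t)|^4\,d\mu$ \emph{uniformly} in $t$ is not quite delivered by the ingredients you cite: \eqref{energy lot cvgce} is stated only for fixed functions, so uniformity over $\{\psi(t):|t|\leq T_0\}$ requires an additional tightness/compactness argument. Fortunately your contradiction argument does not actually need this uniformity; along the offending subsequence $t_n\to t_*$, the convergence $\psi^{\epsilon_n}(t_n)\to\psi(t_*)$ in $H^s\cap\X$ together with \eqref{energy lot cvgce} at the single function $\psi(t_*)$ and $E^{\epsilon_n}[\psi_0]\to E[\psi_0]$ already gives $\|\psi^{\epsilon_n}(t_n)\|_{H^1}\to\|\psi(t_*)\|_{H^1}$, which is all Radon--Riesz needs. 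This is exactly how the paper proceeds.
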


\bpf
We first note that uniqueness follows from Proposition~\ref{p:weak Lipschitz}.

Turning to the problem of global existence, let \(I\subseteq (0,\infty)\) be the set of times \(T>0\) for which there exists a solution \(\psi\in C([-T,T];H_w^1\cap H^s\cap \X)\) for all  \(0\leq s<1\) of  \eqref{NLS} with \(\psi(0) = \psi_0\), which conserves mass and satisfies the energy inequality \eqref{energy ineq}.

By definition, \(I\) is (relatively) closed. Proposition~\ref{p:local existence} shows that \(I\) is non-empty. Moreover, if \(T\in I\) then we may apply Proposition~\ref{p:local existence} with initial data \(\psi(\pm T)\in H^1\cap \X\) to show that \(I\) is open. As a consequence, \(I = (0,\infty)\), yielding global existence.

Let us now suppose for a contradiction that there exists some time \(T\neq 0\) for which
\[
E\bigl[\psi(T)\bigr]<E\bigl[\psi_0\bigr].
\]
We may then define
\[
\phi(t,x) = \psi(T + t,x)
\]
to obtain a solution \(\phi\in C(\R;H_w^1\cap H^s\cap \X)\) for all $0\leq s<1$ of \eqref{NLS} for which
\[
E\bigl[\phi(-T)\bigr]>E\bigl[\phi(0)\bigr],
\]
contradicting the fact that the unique solution of \eqref{NLS} with initial data \(\phi(0)\) satisfies the energy inequality \eqref{energy ineq}. As a consequence, we must have
\[
E\bigl[\psi(t)\bigr] = E\bigl[\psi_0\bigr]
\]
for all times \(t\in \R\).

From \eqref{energy lot} and the fact that \(\psi\in C(\R;H^s\cap \X)\) for $\frac23\leq s<1$, the map
\[
t\mapsto\int_\R |\psi(t)|^4\,d\mu
\]
is continuous on \(\R\). Combining this with the conservation of energy we see that for all \(t_0\in \R\) we have
\[
\lim_{t\to t_0}\|\psi(t)\|_{H^1}^2 = \lim_{t\to t_0}\left[ M[\psi(t)] + 2E[\psi(t)] - \int_\R |\psi(t)|^4\,d\mu\right] = \|\psi(t_0)\|_{H^1}^2.
\]
Consequently, the map \(\R\ni t\mapsto \|\psi(t)\|_{H^1}\) is continuous. As \(\psi\in C(\R;H_w^1)\), by the Radon--Riesz theorem we then have \(\psi\in C(\R;H^1)\).

It remains only to prove \eqref{global convergence}.  From Proposition~\ref{p:local existence}, we obtain convergence in a short time interval $[-T,T]$, but only in the weaker $C([-T,T];H^s\cap\X)$ norm.  Nevertheless, this allows us to deduce weak convergence $\psi^\epsilon(t_\epsilon)\rightharpoonup\psi(t)$ in $H^1$, whenever $t_\epsilon\to t\in[-T,T]$.  We will demonstrate how to use conservation of energy to deduce convergence in $C([-T,T]; H^1)$ . This combination of Proposition~\ref{p:local existence} and upgraded convergence can then be iterated to cover any compact time interval.

If convergence of $\psi^\epsilon$ to $\psi$ in $C([-T,T];H^1)$ were to fail, then (passing to a subsequence) we could find $t_\epsilon\to t\in[-T,T]$ such that
\begin{align}\label{star}
\inf_\epsilon\bigl\|\psi^\epsilon(t_\epsilon) - \psi(t)\bigr\|_{H^1} >0.
\end{align}
It is important here that we have already showed that \(\psi\in C(\R;H^1)\).

By energy conservation and \eqref{energy lot cvgce}, we have
\begin{equation*}
E[\psi(t)]=E[\psi_0]=\lim_{\epsilon \to 0} E^\epsilon[\psi_0] = \lim_{\epsilon \to 0} E^\epsilon[\psi^\epsilon(t_\epsilon)].
\end{equation*}
Combining this with the convergence of the potential energies (see the proof of \eqref{enerquality}), we deduce that the $H^1$ norms of $\psi^\epsilon(t_\epsilon)$ converge to that of $\psi(t)$.  By the Radon--Riesz theorem and the weak convergence noted above, this implies norm convergence, contradicting \eqref{star}.
\epf

To complete the proof of Theorem~\ref{t:deterministic}, it remains to prove that the solution map is continuous.

\begin{prop}\label{p:cont}
If \(\psi_{n,0}\to \psi_0\) is a convergent sequence of \(H^1\cap \X\) initial data with corresponding solutions \(\psi_n,\psi\in C(\R;H^1\cap \X)\), then for any time \(T>0\) we have
\[
\lim_{n\to\infty}\sup_{|t|\leq T}\|\psi_n(t) - \psi(t)\|_{H^1\cap \X} = 0.
\]
\end{prop}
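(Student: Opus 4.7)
The plan is to mirror the Radon--Riesz strategy from the end of Proposition~\ref{p:global existence}, supplemented by a tightness argument for the weighted part of the $\X$-norm. First, I would establish uniform control $K_0 := \sup_n \sup_{|t|\leq T}\|\psi_n(t)\|_{H^1\cap\X} < \infty$. Conservation of mass and energy, combined with the trivial estimate $\|\psi_n\|_{H^1}^2 \leq M[\psi_n] + 2E[\psi_n]$ (since $\int |\psi_n|^4\,d\mu \geq 0$) and the $H^1\cap\X$-continuity of the energy functional (a consequence of \eqref{energy lot}), yields uniform $H^1$-bounds. For the $\X$-component, the computation $\tfrac{d}{dt}\|\psi_n\|_\X^2 = 2\Im\int\overline{\psi_n}\p_x\psi_n\,w_x\,dx$ used in the proof of \eqref{Main AP} is justified on the true solution by approximation from Proposition~\ref{p:global existence} and produces a uniform $\X$-bound. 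Applying Proposition~\ref{p:weak Lipschitz} to the pair $(\psi_n,\psi)$ then yields
\[
\sup_{|t|\leq T}\|\psi_n(t) - \psi(t)\|_{H^{-1}} \leq C(T,K_0)\|\psi_{n,0} - \psi_0\|_{H^1} \longrightarrow 0,
\]
and interpolation with the uniform $H^1$-bound upgrades this to convergence in $C([-T,T];H^s)$ for every $0 \leq s < 1$.

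Next, I would upgrade $H^s$-convergence to $H^1$-convergence in the spirit of the last paragraph of Proposition~\ref{p:global existence}. Suppose for contradiction that $\psi_n \to \psi$ fails in $C([-T,T];H^1)$. Passing to a subsequence, one finds $t_n \to t_* \in [-T,T]$ and $\delta > 0$ with $\|\psi_n(t_n) - \psi(t_n)\|_{H^1} \geq \delta$; by $H^1$-continuity of $\psi$, it suffices to show $\psi_n(t_n) \to \psi(t_*)$ strongly in $H^1$. The $H^{-1}$-convergence and uniform $H^1$-boundedness give weak $H^1$-convergence $\psi_n(t_n) \rightharpoonup \psi(t_*)$, and for norm convergence, the identity
\[
\|\psi_n(t_n)\|_{H^1}^2 = M[\psi_{n,0}] + 2E[\psi_{n,0}] - \int_\R|\psi_n(t_n)|^4\,d\mu
\]
(together with its analogue for $\psi$) reduces matters to showing $\int |\psi_n(t_n)|^4\,d\mu \to \int|\psi(t_*)|^4\,d\mu$. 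Telescoping the difference and applying \eqref{energy lot} with some $\tfrac23\leq s < 1$, using the uniform $\X$-bound and the $H^s$-convergence $\psi_n(t_n)\to\psi(t_*)$ already in hand, delivers this convergence. Radon--Riesz then supplies strong $H^1$-convergence, contradicting the assumption.

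Finally, for the $\X$-convergence I would decompose $\psi_n - \psi = \varphi^\lambda(\psi_n-\psi) + (1-\varphi^\lambda)(\psi_n-\psi)$ for a small parameter $\lambda > 0$. By \eqref{growth}, the first summand is bounded by $\lesssim(\cN_0 + \lambda^{-1/2})\|\psi_n-\psi\|_{L^2}$ and vanishes as $n\to\infty$ for each fixed $\lambda$. For the tail, the estimates \eqref{tight L2}--\eqref{tight} (whose proofs in Lemma~\ref{l:AP bds} extend to the true solution by approximation from Proposition~\ref{p:global existence}) give
\[
\sup_{|t|\leq T}\|(1-\varphi^\lambda)\psi_n(t)\|_\X^2 \leq \|(1-\varphi^\lambda)\psi_{n,0}\|_\X^2 + CT\<\cN_0^2\lambda\>\sup_{|t|\leq T}\|(1-\varphi^\lambda)\psi_n(t)\|_{L^2}\|\psi_n\|_{H^1},
\]
whose right-hand side tends to zero uniformly in $n$ as $\lambda\to 0$, since $\|\psi_{n,0}-\psi_0\|_{H^1\cap\X}\to 0$ combined with the tightness of $\psi_0$ in $L^2\cap\X$ controls both the initial datum and (via \eqref{tight L2}) the $L^2$-tail uniformly in $n$. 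An analogous bound for $\psi$ and a triangle inequality complete the proof. The main technical obstacle is this final tail control in $\X$, which requires simultaneous use of the $L^2$- and $\X$-tightness estimates and the verification that they propagate from the regularized equation to the true solution.
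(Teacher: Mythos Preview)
Your proposal is correct and follows essentially the same strategy as the paper: weak Lipschitz for $H^{-1}$ convergence, interpolation to $H^s$, energy conservation plus Radon--Riesz for $H^1$, and a $\varphi^\lambda$-based tightness argument for $\X$. The paper differs only in ordering (it proves $\X$-convergence first and then uses it---rather than mere $\X$-boundedness, which as you observe already suffices---in the potential-energy step) and in replacing your subsequence/contradiction Radon--Riesz argument with a direct one via the compact set $Q = \{(1-\partial_x^2)\psi(t):|t|\leq T\}\subset H^{-1}$.
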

\bpf
Our starting point is Proposition~\ref{p:weak Lipschitz}, which yields
\[
\lim_{n\to\infty}\sup_{|t|\leq T}\|\psi_n(t) - \psi(t)\|_{H^{-1}} = 0.
\]

Next, we upgrade this to convergence in \(C([-T,T];H^{\frac23})\). From the conservation of mass and energy followed by \eqref{energy lot}, we may bound
\[
M[\psi_n(t)] + E\bigl[\psi_n(t)\bigr] = M[\psi_{n,0}] + E\bigl[\psi_{n,0}\bigr]\lesssim \Bigl[1 + \|\psi_{n,0}\|_\X\|\psi_{n,0}\|_{L^2}\Bigr]\|\psi_{n,0}\|_{H^1}^2,
\]
from which we obtain the uniform bound
\eq{uniform H1 along seq}{
\sup_{n\geq 1}\sup_{|t|\leq T}\|\psi_n(t)\|_{H^1}^2\leq \sup_{n\geq 1}\sup_{|t|\leq T}\Bigl[M[\psi_n(t)] + 2 E[\psi_n(t)]\Bigr] <\infty.
}
As a consequence, for dyadic \(N\geq 1\) we may bound
\begin{align*}
\sup_{|t|\leq T}\|\psi_n(t) - \psi(t)\|_{H^{\frac23}} & \leq \sup_{|t|\leq T}\|P_{\leq N}(\psi_n(t) - \psi(t))\|_{H^{\frac23}} + \sup_{|t|\leq T}\|P_{>N}\psi_n(t)\|_{H^{\frac23}}\\
&\quad + \sup_{|t|\leq T}\|P_{>N}\psi(t)\|_{H^{\frac23}} \\
&\lesssim N^{\frac53}\sup_{|t|\leq T}\|\psi_n(t) - \psi(t)\|_{H^{-1}} + N^{-\frac13}\sup_{n\geq 1}\sup_{|t|\leq T}\|\psi_n(t)\|_{H^1}\\
&\quad + N^{-\frac13}\sup_{|t|\leq T}\|\psi(t)\|_{H^1}.
\end{align*}
By first taking \(n\to\infty\) and then \(N\to\infty\), we see that \(\psi_n\to \psi\) in \(C([-T,T];H^{\frac23})\).

We recall that a compact subset \(Q\subseteq H^{-1}\) is bounded and equicontinuous, and consequently
\[
\lim_{N\to\infty}\sup_{\phi\in Q}\|P_{>N}\phi\|_{H^{-1}} = 0;
\]
see, e.g., \cite{MR3990604,MR0801333}. Mimicking our argument for convergence in \(C([-T,T];H^{\frac23})\), we see that for any compact subset \(Q\subseteq H^{-1}\) we have
\begin{align*}
&\sup_{|t|\leq T}\sup_{\phi\in Q}|\<\phi,\psi_n(t) - \psi(t)\>|\\
&\qquad\leq \sup_{|t|\leq T}\sup_{\phi\in Q}|\<\phi,P_{\leq N}(\psi_n(t) - \psi(t))\>| + \sup_{|t|\leq T}\sup_{\phi\in Q}|\<\phi,P_{>N}\psi_n(t)\>|\\
&\qquad \quad + \sup_{|t|\leq T}\sup_{\phi\in Q}|\<\phi,P_{>N}\psi(t)\>|\\
&\qquad\lesssim N^2 \sup_{\phi\in Q}\|\phi\|_{H^{-1}}\sup_{|t|\leq T}\|\psi_n(t) - \psi(t)\|_{H^{-1}}\\
&\qquad\quad + \sup_{\phi\in Q}\|P_{>N}\phi\|_{H^{-1}}\sup_{n\geq 1}\sup_{|t|\leq T}\|\psi_n(t)\|_{H^1} + \sup_{\phi\in Q}\|P_{>N}\phi\|_{H^{-1}}\sup_{|t|\leq T}\|\psi(t)\|_{H^1},
\end{align*}
so once again taking \(n\to\infty\) and then \(N\to\infty\) gives us
\eq{weak continuity in H1}{
\lim_{n\to\infty}\;\sup_{|t|\leq T}\;\sup_{\phi\in Q}\;|\<\phi,\psi_n(t) - \psi(t)\>| = 0.
}
We will return to this result later when proving $C([-T,T];H^1)$ convergence.

Turning to convergence in \(C([-T,T];\X)\), we first note that as \(\psi_{n,0}\to \psi_0\) in \(\X\) we have
\begin{equation}\label{923}
\lim_{\lambda\to0}\;\sup_{n\geq 1}\;\|(1 - \varphi^{\lambda})\psi_{n,0}\|_\X = 0\qtq{and}
\lim_{\lambda\to0}\;\sup_{n\geq 1}\;\|(1 - \varphi^{\lambda})\psi_{n,0}\|_{L^2} = 0.
\end{equation}

In view of the convergence \eqref{global convergence}, using \eqref{tight L2}, \eqref{tight}, \eqref{uniform H1 along seq}, and \eqref{923}, we find that
\begin{equation*}
\lim_{\lambda\to 0}\  \sup_{|t|\leq T}\ \Bigl[ \|(1 - \varphi^{\lambda})\psi(t)\|_{L^2}^2 + \sup_n\|(1 - \varphi^{\lambda})\psi_n(t)\|_{L^2}^2 \Bigr]= 0
\end{equation*}
and
\begin{equation}\label{933}
\lim_{\lambda\to 0}\ \sup_{|t|\leq T}\ \Bigl[ \|(1 - \varphi^{\lambda})\psi(t)\|_\X^2 + \sup_n \|(1 - \varphi^{\lambda})\psi_n(t)\|_{\X}^2\Bigr] =0.
\end{equation}

We are now ready to address convergence in \(C([-T,T];\X)\).  Clearly
\begin{align*}
&\|\psi_n(t) - \psi(t)\|_\X \leq \|\varphi^{\lambda}[\psi_n(t) - \psi(t)]\|_\X + \|(1 - \varphi^{\lambda})\psi_n(t)\|_\X
	+ \|(1 - \varphi^{\lambda})\psi(t)\|_\X
\end{align*}
holds for all $\lambda>0$ and $t\in[-T,T]$.  Using \eqref{growth} we may bound
\begin{align*}
\sup_{|t|\leq T} \|\varphi^{\lambda}[\psi_n(t) - \psi(t)]\|_\X& \lesssim \bigl(\cN_0 + \tfrac1{\sqrt{\lambda}}\bigr)\sup_{|t|\leq T}\|\psi_n(t) - \psi(t)\|_{L^2}.
\end{align*}
Recalling \eqref{933} and that we have already proved convergence in \(C([-T,T];H^{\frac23})\), we may send \(n\to\infty\) and then \(\lambda\to0\), we deduce that \(\psi_n\to \psi\) in \(C([-T,T];\X)\).

As we have now shown that \(\psi_n\to \psi\) in \(C([-T,T];H^{\frac23}\cap \X)\), \eqref{energy lot} gives us
\[
\lim_{n\to\infty}\sup_{|t|\leq T}\left|\int_\R|\psi_n(t)|^4\,d\mu - \int_\R|\psi(t)|^4\,d\mu\right| = 0.
\]
Conservation of mass and energy then yields
\begin{align*}
\sup_{|t|\leq T}\Bigl|\|\psi_n(t)\|_{H^1}^2 - \|\psi(t)\|_{H^1}^2\Bigr| & \leq \Bigl| M\bigl[\psi_{n,0}\bigr] - M[\psi_0]\Bigr] + 2\Bigl|E\bigl[\psi_{n,0}\bigr] - E\bigl[\psi_0\bigr] \Bigr|\\
&\quad + \sup_{|t|\leq T}\left|\int_\R|\psi_n(t)|^4\,d\mu - \int_\R|\psi(t)|^4\,d\mu\right|,
\end{align*}
which converges to zero as \(n\to\infty\).

Finally, as \(\psi\in C(\R;H^1)\), the set
\[
Q = \bigl\{(1 - \p_x^2)\psi(t):|t|\leq T\bigr\}\subseteq H^{-1}
\]
is compact. We may then use \eqref{weak continuity in H1} with the convergence of norms to see that
\begin{align*}
\sup_{|t|\leq T}\|\psi_n(t) - \psi(t)\|_{H^1}^2 &\leq \sup_{|t|\leq T}\Bigl|\|\psi_n(t)\|_{H^1}^2 - \|\psi(t)\|_{H^1}^2\Bigr|\\
&\quad + 2\sup_{|t|\leq T}\sup_{\phi\in Q}\Bigl|\<\phi,\psi_n(t) - \psi(t)\>\Bigr|
\end{align*}
converges to zero as \(n\to\infty\).
\epf

We now turn our attention to a deterministic version of Theorem~\ref{t:demol}.  This theorem concerns solutions $\psi_\epsilon$ to $(\text{NLS}_{\mu^\epsilon})$ with  $\mu^\epsilon$ defined by \eqref{mueps}.  Specifically, we prove an analogue of \eqref{global convergence} for the solutions $\psi_\epsilon$ to
\begin{align}\label{NLSmue}
i\partial_t \psi_\epsilon = -\partial_x^2 \psi_\epsilon + 2|\psi_\epsilon|^2\psi_\epsilon \,d\mu^\epsilon
	\qtq{with} \psi_\epsilon(0) = \psi_0 \in \X \cap H^1.
\end{align}

\begin{cor}\label{C:demol}
Suppose $\cN_0(\mu)<\infty$ and $\psi_0\in \X \cap H^1$.  For each $\epsilon\in(0,1]$, there is a unique global solution $\psi_\epsilon$ to \eqref{NLSmue}.  Moreover,
\begin{equation}\label{muec}
\lim_{\epsilon\to0}\ \sup_{|t|\leq T} \ \bigl\|\psi_\epsilon(t) - \psi(t)\bigr\|_{H^1\cap\X} =0 \qtq{for each} 0<T<\infty.
\end{equation}
\end{cor}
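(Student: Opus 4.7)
My plan is to treat $\mu^\epsilon$ as the coupling measure in its own right and invoke Theorem~\ref{t:deterministic} with $\mu$ replaced by $\mu^\epsilon$. First I verify that $\cN_0(\mu^\epsilon)<\infty$ uniformly in $\epsilon\in(0,1]$: since $\rho^\epsilon$ is supported in $[-1,1]$ we have $\mu^\epsilon(I_k)\lesssim \max_{|j-k|\leq 1}\mu(I_j)$, and \eqref{1-Lip Nk}--\eqref{compare mu Nk} then give $\cN_k(\mu^\epsilon)\lesssim \cN_k(\mu)$ uniformly. In particular, $\psi_0\in H^1\cap L^2_{\mu^\epsilon}$, so Theorem~\ref{t:deterministic} furnishes a unique global solution $\psi_\epsilon\in C(\R;H^1\cap L^2_{\mu^\epsilon})$ conserving mass and the energy $E^\epsilon[\psi_\epsilon] = \int\tfrac12|\partial_x\psi_\epsilon|^2 + \tfrac12|\psi_\epsilon|^4\,d\mu^\epsilon$. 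Since the mass-current identity $\partial_t|\psi_\epsilon|^2 + \partial_x(2\Im\overline{\psi_\epsilon}\partial_x\psi_\epsilon) = 0$ is insensitive to the coupling measure, the 1-Lipschitz momentum argument of Lemma~\ref{l:AP bds} applied with the original weight $w(\cdot;\mu)$ also places $\psi_\epsilon$ in $C(\R;\X)$ with uniform bounds
\[
\sup_{0<\epsilon\leq 1}\sup_{|t|\leq T}\bigl[\|\psi_\epsilon(t)\|_{H^1} + \|\psi_\epsilon(t)\|_\X\bigr] =: K_T < \infty,
\]
using \eqref{energy lot} and \eqref{energy lot cvgce} to convert $E^\epsilon[\psi_0]$ into a uniform $H^1$-bound.

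For \eqref{muec} I will mimic the Cauchy-in-$\epsilon$ scheme of Proposition~\ref{p:local existence}. Subtracting the two Duhamel formulas gives
\[
\psi_\epsilon(t) - \psi(t) = -2i\int_0^t e^{i(t-\tau)\partial_x^2}\bigl[(|\psi_\epsilon|^2\psi_\epsilon - |\psi|^2\psi)\,d\mu^\epsilon + |\psi|^2\psi\,(d\mu^\epsilon - d\mu)\bigr]d\tau.
\]
The first (Lipschitz) term I treat exactly as in Proposition~\ref{p:weak Lipschitz}: the bilinear bounds \eqref{measure nonlin 1}--\eqref{measure nonlin 2} apply uniformly to $\mu^\epsilon$ via the $\cN_k$-comparison above. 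The second term plays the role of the $\II$-term in the proof of Proposition~\ref{p:local existence}; to control it I extend \eqref{mu-eps-diff} to the endpoint $\epsilon'=0$, proving $\|\chi_k(d\mu - d\mu^\epsilon)\|_{H^{-1}}\lesssim \sqrt\epsilon\,\cN_k(\mu)$. This is accomplished either by sending $\epsilon'\to 0$ in the existing estimate or, equivalently, by bounding $\phi - \rho^\epsilon *\phi$ in $H^{-1}$-duality against $d\mu$ through the same Plancherel/fundamental-theorem manipulation. The analog of \eqref{measure nonlin 3} then yields $\||\psi|^2\psi(d\mu^\epsilon - d\mu)\|_{H^{-1}}\lesssim \sqrt\epsilon\,\|\psi\|_\X\|\psi\|_{H^1}^2$, which vanishes as $\epsilon\to 0$.

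The Bernstein/dispersive/Gronwall bootstrap in the proof of Proposition~\ref{p:local existence} then yields a time $T>0$ (depending only on $\psi_0$ through $K_T$) on which $\psi_\epsilon\to\psi$ in $C([-T,T];H^s)$ for every $0\leq s<1$; the tightness estimates \eqref{tight L2} and \eqref{tight}, whose derivation uses only the 1-Lipschitz property of $w$ and mass-current conservation and thus applies verbatim to $\psi_\epsilon$, upgrade this to convergence in $C([-T,T];\X)$. Following Proposition~\ref{p:global existence}, energy conservation $E^\epsilon[\psi_\epsilon(t)] = E^\epsilon[\psi_0]\to E[\psi_0] = E[\psi(t)]$ combined with the $C_t(H^s\cap\X)$-convergence and \eqref{energy lot}, \eqref{energy lot cvgce} shows that $\int|\psi_\epsilon(t)|^4\,d\mu^\epsilon\to\int|\psi(t)|^4\,d\mu$ uniformly in $t$, so $\|\psi_\epsilon(t)\|_{H^1}\to \|\psi(t)\|_{H^1}$; since weak $H^1$-convergence is already available from the $H^s$-convergence and the uniform $H^1$-bound, Radon--Riesz upgrades this to $C([-T,T];H^1)$-convergence, and iterating on intervals of length $T$ completes \eqref{muec}. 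The only step not already assembled in Section~\ref{s:deterministic} is the $\epsilon'=0$ endpoint of \eqref{mu-eps-diff}, which I regard as the main technical point to verify; everything else is a direct transcription of the deterministic machinery with the pair $(\psi^\eta,\psi^\epsilon)$ replaced by $(\psi_\epsilon,\psi)$.
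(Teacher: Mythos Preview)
Your approach is essentially identical to the paper's: both invoke Theorem~\ref{t:deterministic} for $\mu^\epsilon$ via the comparison $\cN_k(\mu^\epsilon)\simeq\cN_k(\mu)$, then rerun the Cauchy scheme of Proposition~\ref{p:local existence} (with $\varphi^\eta\equiv\varphi^\epsilon\equiv 1$, so term $I$ disappears and only the $\II$-type and $\III$-type terms remain) followed by the Radon--Riesz upgrade from Proposition~\ref{p:global existence}. One small correction: the endpoint you single out as ``the main technical point to verify'' is already contained in \eqref{mu-eps-diff}, whose stated range is $0\leq\epsilon<\eta\leq 1$, so no additional work is required there.
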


\begin{proof}
For $\epsilon\in(0,1]$, the definition \eqref{mueps} of $\mu^\epsilon$ guarantees that
$$
\mu^\epsilon( I_k) \leq \mu( I_{k-1})+ \mu( I_k)+ \mu( I_{k+1}) \qtq{and} \mu( I_k) \leq \mu^\epsilon( I_{k-1})+ \mu^\epsilon( I_k)+ \mu^\epsilon( I_{k+1})
$$
and consequently,
\begin{align}\label{999}
\cN_k(\mu) \simeq \cN_k(\mu^\epsilon) \qtq{and} w(x;\mu)\simeq w(x;\mu^\epsilon) \quad\text{uniformly for $\epsilon\in(0,1]$.}
\end{align}
This demonstrates that the norms on $\X$ and $\Xe$ are equivalent, uniformly for $\epsilon\in(0,1]$.
In particular, $\psi_0\in \X$ implies that $\psi_0 \in L^2_{\mu^\epsilon}$.  Thus, Theorem~\ref{t:deterministic} guarantees that there is a unique global solution $\psi_\epsilon$ to \eqref{muec}; moreover, this solution is $H^1\cap\X$-continuous and conserves both the mass and the energy functional
$$
E_\epsilon[\psi] := \int \tfrac12 |\psi'(x)|^2\,dx + \int \tfrac12|\psi(x)|^4 \,d\mu^\epsilon(x).
$$

Using mass and energy conservation, as well as  \eqref{global convergence} and  \eqref{Main AP}, we also see that
\begin{align*}
\sup_{0<\epsilon\leq 1}\sup_{|t|\leq T} \|\psi_\epsilon(t)\|_{H^1\cap \X} < \infty \qtq{for any} 0<T<\infty.
\end{align*}
The existence of the solutions $\psi_\epsilon$ and the fact that they obey such bounds allow us to repeat the proof of Proposition~\ref{p:local existence} replacing $\varphi^\eta\equiv\varphi^\epsilon\equiv 1$ (but retaining $\varphi^\lambda$).   In this way, we find that $\psi_\epsilon(t)$ are Cauchy in $C([-T,T];\X\cap H^{2/3})$ and that the limit is the unique solution $\psi(t)$ to \eqref{NLS}.  Although this convergence is local in time, once we demonstrate $C([-T,T];H^1)$ convergence, this argument can be iterated as in the proof of Proposition~\ref{p:global existence}.

The proof of Proposition~\ref{p:global existence} also contains the key idea for upgrading convergence, namely, energy conservation and the Radon--Riesz argument.
Writing
\begin{align*}
\biggl| \int |\psi|^4 \,d\mu - \int |\psi_\epsilon|^4 \,d\mu^\epsilon\biggr| 
	&\leq \biggr|\int |\psi|^4 \,[d\mu - d\mu^\epsilon] \biggr| + \int \bigl|  |\psi|^4 - |\psi_\epsilon|^4 \bigr| \,d\mu^\epsilon
\end{align*}
and then applying \eqref{energy lot cvgce} and \eqref{measure nonlin 1} (where we note that \(d\mu^\epsilon = \dmue\,dx\)), we deduce that
\begin{align*}
\lim_{\epsilon\to 0} \int |\psi_\epsilon(t)|^4 \,d\mu^\epsilon = \int |\psi(t)|^4 \,d\mu \qtq{for all} t\in[-T,T].
\end{align*}
In this way, energy conservation shows that $\|\psi_\epsilon(t)\|_{H^1} \to \|\psi(t)\|_{H^1}$ and the Radon--Riesz theorem allows us to upgrade convergence in the weak topology to norm convergence.
\end{proof}

\section{Proof of Theorem~\ref{t:main}}\label{s:main proof}

To apply Theorem~\ref{t:deterministic} to prove Theorem~\ref{t:main}, we require one additional lemma:
\begin{lem}\label{l:Nk finite}
Let \(\mu\) be distributed according to the Poisson process \eqref{E:pp}. Then
\eq{expect N0}{
\bbE\,\cN_0^2\lesssim 1,
}
and \(\cN_k<\infty\) for all \(k\in \Z\) almost surely. Moreover, if \(f\in L^2\) then
\eq{expect X}{
\bbE\|f\|_\X^{2p}\lesssim \|f\|_{L^2}^{2p}
}
for any $1\leq p<\infty$; in particular, \(f\in \X\) almost surely.
\end{lem}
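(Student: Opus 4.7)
My plan rests on two structural features of the Poisson process: first, the variables \(X_\ell := \mu(I_\ell)\), \(\ell\in\Z\), are i.i.d.\ Poisson random variables of mean \(1\) (since the \(I_\ell\) are disjoint unit intervals, as explained after Definition~\ref{D:pp}); and second, the joint law of \((X_\ell)_\ell\) is invariant under integer shifts, so the distribution of \(\cN_k\) is independent of \(k\) and, in particular, \(\bbE\,\cN_k^{2q}=\bbE\,\cN_0^{2q}\) for every \(q\geq 1\). The essential analytic input is the super-exponential Poisson tail estimate \(\bbP(X_\ell\geq s)\lesssim e^{-cs\log s}\) for \(s\geq 2\), which is immediate from Chernoff's inequality applied to \(\bbE[e^{tX_\ell}]=e^{e^t-1}\).

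For \eqref{expect N0}, I would start from the pointwise bound
\[
\cN_0^2\leq 4+\sum_{\ell\in\Z}\bigl(X_\ell^2-|\ell|\bigr)_+,
\]
and use the layer-cake identity
\[
\bbE\bigl(X_\ell^2-|\ell|\bigr)_+=\int_0^\infty\bbP\bigl(X_\ell>\sqrt{t+|\ell|}\bigr)\,dt.
\]
Substituting the Poisson tail bound and changing variables \(u=\sqrt{t+|\ell|}\) yields a quantity that decays super-exponentially in \(|\ell|\); in particular the sum over \(\ell\) converges, which gives \eqref{expect N0}. Since \(\bbE\,\cN_0^2<\infty\) forces \(\cN_0<\infty\) almost surely, Lemma~\ref{l:omega} immediately propagates finiteness to every \(\cN_k\), settling the second claim.

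For \eqref{expect X}, I would combine the equivalent norm \eqref{equiv norm} with a Jensen argument. Setting \(a_k:=\|\chi_k f\|_{L^2}^2\) and \(S:=\sum_k a_k\simeq\|f\|_{L^2}^2\), the probability weights \(a_k/S\) and the convexity of \(x\mapsto x^p\) give
\[
\|f\|_\X^{2p}\simeq\Bigl(\sum_{k\in\Z}\cN_k(\mu)^2 a_k\Bigr)^p\leq S^{p-1}\sum_{k\in\Z}\cN_k(\mu)^{2p}\,a_k.
\]
Taking expectations and invoking stationarity, the right-hand side becomes \(\bbE\,\cN_0^{2p}\cdot S^p\lesssim \bbE\,\cN_0^{2p}\cdot\|f\|_{L^2}^{2p}\). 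It thus suffices to show \(\bbE\,\cN_0^{2p}<\infty\) for each fixed \(p\geq 1\); this follows from the same scheme as the \(p=1\) case, replacing \((X_\ell^2-|\ell|)_+\) by \((X_\ell^2-|\ell|)_+^p\) and running the layer-cake argument with an extra factor of \(p\) in the exponent.

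The only real obstacle is arranging summability over \(\ell\) of the expectations \(\bbE(X_\ell^2-|\ell|)_+^p\); any polynomial Poisson tail would be inadequate here. The decisive point is that the Poisson distribution has tails decaying faster than any polynomial—indeed as \(e^{-cs\log s}\)—so every layer-cake integral converges and remains super-exponentially small in \(|\ell|\), thereby closing both estimates and giving \(f\in\X\) almost surely.
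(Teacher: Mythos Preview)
Your proof is correct and takes a somewhat different technical route from the paper's. Where you bound \(\cN_0^{2p}\) via \(\sup_\ell(\cdot)_+\leq\sum_\ell(\cdot)_+\) and then control \(\bbE(X_\ell^2-|\ell|)_+^p\) by the layer-cake formula together with the Chernoff tail bound, the paper instead applies Young's inequality with an exponent \(r>4\) to obtain the pointwise estimate
\[
\cN_k^{2p}\lesssim 1+\sum_{\ell\in\Z}\frac{\mu(I_\ell)^{pr}}{\langle k-\ell\rangle^{p(r/2-1)}},
\]
and then simply takes expectations using only that Poisson random variables have all moments finite. For \eqref{expect X}, the paper uses Minkowski's inequality in \(L^p(d\bbP)\) rather than your Jensen-plus-stationarity step; the two are essentially equivalent here, and your explicit use of stationarity (\(\bbE\,\cN_k^{2p}=\bbE\,\cN_0^{2p}\)) is a nice touch.

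One small correction to your commentary: it is not true that ``any polynomial Poisson tail would be inadequate.'' The paper's argument shows that finiteness of a single moment \(\bbE\,X_\ell^{pr}\) with \(r>4\) already yields the required summability, so the full super-exponential Chernoff bound is stronger than necessary. This does not affect the validity of your proof, but it does mean the paper's approach extends verbatim to any i.i.d.\ family \((X_\ell)\) with all polynomial moments bounded, whereas your argument as written leans on the specific Poisson tail.
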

\bpf
Fixing a choice of \(r>4\), we apply Young's inequality to bound
\[
\mu(I_\ell)^2|k-\ell|^{\frac r2-1}\leq \tfrac2r\bigl(\tfrac{r-2}r\bigr)^{\frac{r-2}2}\mu(I_\ell)^r + |k-\ell|^{\frac r2};
\]
in particular, if \(k\neq \ell\) then
\[
\mu(I_\ell)^2 - |k-\ell|\lesssim\frac{\mu(I_\ell)^r}{\<k-\ell\>^{\frac r2-1}}
\]
and so, for any $1\leq p<\infty$,
\[
\cN_k^{2p}\lesssim 1 + \sum_{\ell\in \Z}\frac{\mu(I_\ell)^{pr}}{\<k-\ell\>^{p(\frac r2-1)}}.
\]

We recall that the \(\mu(I_\ell)\) are independent Poisson random variables with mean \(\bbE\mu(I_\ell) = 1\). As a consequence,
\[
\sup_{\ell\in \Z}\bbE\mu(I_\ell)^{pr}\lesssim 1,
\]
and hence
\eq{upper Nk}{
\bbE\bigl( \cN_k^{2p} \bigr)\lesssim 1 + \sum_{\ell\in\Z} \frac{\bbE\mu(I_\ell)^{pr}}{\<k-\ell\>^{p(\frac r2-1)}}\lesssim1,
}
which gives us \eqref{expect N0}. Consequently, \(\cN_0^2<\infty\) almost surely and the estimate \eqref{1-Lip Nk} then ensures that almost surely we have \(\cN_k^2\leq \cN_0^2 + |k|<\infty\) for all \(k\in \Z\).

Similarly, if \(f\in L^2\) then we may apply \eqref{equiv norm}, Minkowski's inequality, \eqref{upper Nk}, and then \eqref{l2 Hs} to bound
\begin{align*}
\Bigl\{\bbE\|f\|_\X^{2p} \Bigr\}^{\frac1p}\simeq \Bigl\{ \bbE\Bigl[ \sum_{k\in \Z} \cN_k^2\|\chi_k f\|_{L^2}^2 \Bigr]^p \Bigr\}^{\frac1p}
&\lesssim \sum_{k\in \Z}\Bigl\{ \bbE\Bigl[ \cN_k^{2p}\|\chi_k f\|_{L^2}^{2p}\Bigr] \Bigr\}^{\frac1p}\\
&\lesssim  \sum_{k\in \Z} \|\chi_k f\|_{L^2}^2\simeq \|f\|_{L^2}^2,
\end{align*}
from which we obtain \eqref{expect X}.
\epf

We are now in a position to complete the proofs of our main theorems:

\bpf[Proof of Theorem~\ref{t:main}] Given \(\psi_0\in H^1\), Lemma~\ref{l:Nk finite} ensures that we almost surely have \(\cN_0<\infty\) and \(\psi_0\in \X\). We may then apply Proposition~\ref{p:global existence} to deduce that almost surely there exists a unique strong solution \(\psi\in C(\R;H^1\cap \X)\) of \eqref{NLS} that conserves mass and energy.

It remains to prove our continuity statement \eqref{continuity}.  We begin by fixing $2\leq p <\infty$, $T>0$, and a sequence of initial data \(\psi_{n,0}\) that converges to \(\psi_0\) in \(H^1\).  It suffices to show that every subsequence of $\psi_{n,0}$ admits a further subsequence that satisfies \eqref{continuity}. To keep the notation unburdened by further subscripts, all subsequences of $\psi_{n,0}$ will still be denoted $\psi_{n,0}$.  We start by observing that any subsequence of $\psi_{n,0}$ admits a further subsequence that satisfies two additional properties:
\begin{equation}\label{ID subs}
\sup_n \| \psi_{n,0} \|_{H^1} \lesssim 1 + \| \psi_{0} \|_{H^1} \qtq{and}  \sum_n \| \psi_{n,0} - \psi_0 \|_{H^1} \lesssim 1 + \| \psi_{0} \|_{H^1} .
\end{equation}

To prove that \eqref{continuity} holds for sequences satisfying \eqref{ID subs}, we will combine Proposition~\ref{p:cont} with the dominated convergence theorem.  Our main task is verifying the existence of a suitable dominating function.  We choose
\begin{equation}\label{F_defn}
F_T(\mu) := \sup_{n} \sup_{|t|\leq T} \Bigl[ \|\psi(t)\|_{H^1}^2 + \|\psi_n(t)\|_{H^1}^2 \Bigr]^{\frac p2} ,
\end{equation}
where $\psi(t)$ and $\psi_n(t)$ denote the solutions to \eqref{NLS} with initial data $\psi_0$ and $\psi_{n,0}$, respectively.  In light of the conservation of mass and energy,
\begin{align*}
F_T(\mu) &\leq \sup_{n} \biggl[ \|\psi_0\|_{H^1}^2 + \|\psi_{n,0}\|_{H^1}^2  + \int |\psi_0|^4 + |\psi_{n,0}|^4 \,d\mu \biggr]^{\frac p2}  \\
&\lesssim_p \|\psi_0\|_{H^1}^p + \biggl[ \int |\psi_0|^4 \,d\mu\biggr]^{\frac p2} \!+ \sum_n \biggl\{\| \psi_{n,0} - \psi_0 \|_{H^1}^p +  \biggl[  \int |\psi_{n,0}-\psi_0|^4 \,d\mu \biggr]^{\frac p2} \biggr\}.
\end{align*}
Employing \eqref{energy lot}, then \eqref{expect X}, and finally \eqref{ID subs}, we find that
\begin{equation}\label{df}
\bbE\{ F_T(\mu) \} \lesssim_p 1 + \|\psi_0\|_{H^1}^p + \|\psi_0\|_{H^1}^{2p}.
\end{equation}
This shows that our dominating function has finite expectation.

By Jensen's inequality, followed by \eqref{expect X} and \eqref{ID subs}, we also have
$$
\bbE \Bigl\{ \sum_n \|\psi_{n,0}-\psi_0\|_\X \Bigr\} \leq \sum_n \sqrt{ \bbE \|\psi_{n,0}-\psi_0 \smash[b]{\|_\X^2 }} < \infty,
$$
which shows that $\psi_{n,0}\to\psi_0$ in $\X$ almost surely.  By hypothesis, we know that $\psi_{n,0}\to \psi_0$ in $H^1$.  These two observations allow us to apply Proposition~\ref{p:cont} and so deduce that
\begin{equation}\label{as conv}
\lim_{n\to\infty}\sup_{|t|\leq T}\|\psi_n(t) - \psi(t)\|_{H^1} = 0 \quad\text{almost surely.} 
\end{equation}
Thus, we see that \eqref{continuity} follows from the dominated convergence theorem.
\epf

\begin{proof}[Proof of Theorem~\ref{t:demol}]
Lemma~\ref{l:Nk finite} shows that both \(\cN_0<\infty\) and \(\psi_0\in \X\) almost surely.  Thus, Corollary~\ref{C:demol} shows that \eqref{muec} holds almost surely.  To deduce $L^p(d\bbP)$ convergence, we employ the dominated convergence theorem as in the proof of Theorem~\ref{t:main}.  As a dominating function, we choose
\begin{equation*}
F_T(\mu) :=  \sup_{|t|\leq T} \; \Bigl[ \|\psi(t)\|_{H^1}^2 + \sup_{0<\epsilon\leq 1} \,\|\psi_\epsilon(t)\|_{H^1}^2 \Bigr]^{\frac p2}.
\end{equation*}
Using the conservation of mass and energy, as well as \eqref{energy lot} and \eqref{999}, we have
\begin{equation*}
F_T(\mu) \lesssim \Bigl[  M[\psi_0] + E[\psi_0] + \sup_{0<\epsilon\leq 1} E_\epsilon[\psi_0] \Bigr]^{\frac p2}
	\lesssim \|\psi_0\|_{H^1}^p + \|\psi_0\|_{H^1}^{3p/2}\|\psi_0\|_{\X}^{p/2} .
\end{equation*}
Thus, using \eqref{expect X}, we see that $\bbE\{ F_T\}<\infty$ and the proof is complete.
\end{proof}

\bibliographystyle{habbrv}
\bibliography{refs}

\end{document}